\documentclass[12pt]{amsart}
\usepackage{amsthm}
\usepackage{amsmath}
\usepackage{amssymb}
\usepackage{comment}
\usepackage{enumitem}
\usepackage{amsfonts}
\usepackage{mathtools}
\usepackage{setspace}
\usepackage[dvipsnames]{xcolor}
\usepackage[hidelinks]{hyperref}
\usepackage{dynkin-diagrams}
\usepackage{tikz}
\usetikzlibrary{decorations.markings,arrows.meta}
\hypersetup{
    colorlinks=true,
    linkcolor=blue,
    urlcolor=red,
    citecolor=Green
    }
\usepackage{subfigure}

\tikzset{
  midarrow/.style={
    postaction={decorate},
    decoration={markings, mark=at position 0.65 with {\arrow{Stealth[length=#1]}}}
  }
}

\usepackage[utf8]{inputenc}

\advance\paperheight1.00cm
\advance\textheight1.00cm
\advance\vsize1.00cm
\advance\topskip-0.5cm
\advance\voffset-0.5cm
\usepackage{tikz,tikz-cd}
\usepackage{todonotes}
\DeclareTextFontCommand{\emph}{\color{blue}\em}
\DeclareMathOperator{\ASM}{ASM}
\DeclareMathOperator{\CSM}{CSM}
\DeclareMathOperator{\MT}{MT}
\DeclareMathOperator{\rk}{rk}
\newcommand{\N}{\mathbb{N}}

\newcommand{\precdot}{\mathrel{\ooalign{$\prec$\cr
  \hidewidth\raise0.001ex\hbox{$\cdot\mkern0.6mu$}\cr}}}

\newtheorem{theorem}{Theorem}[section]
\newtheorem{proposition}[theorem]{Proposition}

\newtheorem{lemma}[theorem]{Lemma}

\newtheorem{question}[theorem]{Question}

\theoremstyle{definition}

\newtheorem{definition}[theorem]{Definition}

\theoremstyle{remark}

\title[MacNeille completions of parabolic quotients]{MacNeille completions of parabolic quotients}
\author{Yibo Gao}
\address{Beijing International Center for Mathematical Research, Peking University, Beijing 100871, China}
\email{gaoyibo@bicmr.pku.edu.cn}
\author{Hanlin Xu}
\address{School of Mathematical Sciences, Peking University, Beijing 100871, China}
\email{2401110030@stu.pku.edu.cn}
\thanks{Y.G. was partially supported by NSFC Grant No. 12471309} 
\date{\today}

\begin{document}
\pagestyle{plain}
\begin{abstract}
Alternating sign matrices (ASMs) arise as the Dedekind–MacNeille completion of the Bruhat order on the symmetric group. They enjoy fruitful combinatorial and geometric properties, with a particularly rich history on enumerations and bijections. In this paper, we explicitly describe the Dedekind-MacNeille completion of the Bruhat order on any parabolic quotients of the symmetric group. It is naturally a subposet of the alternating sign matrices, with different lattice operations. Moreover, we demonstrate the relations between the meet and join operations in this lattice with taking unions and intersection of the corresponding ASM varieties, respectively. Finally, we conclude with a more detailed discussion of special cases.
\end{abstract}
\maketitle

\section{Introduction}\label{sec:intro}
An \emph{alternating sign matrix (ASM)} is a square matrix with entries in $\{0,1,-1\}$ such that
\begin{itemize}
\item the nonzero entries in each and column alternate in sign, and
\item each row and column sums up to $1$.
\end{itemize}
Let $\ASM(n)$ denote the set of $n\times n$ alternating sign matrices. There is a rich history around enumerating the ASMs. The famous conjecture of Mills-Robbins-Rumsey \cite{MRR} provides a beautiful product formula \[|\ASM(n)|=\prod_{j=0}^{n-1}\frac{(3j+1)!}{(n+j)!},\]
which was first proved by Zeilberger \cite{Zei96} with technical arguments, and then by Kuperberg \cite{Kup96} with six-vertex models. There is also an explicit yet complicated bijective proof given by Fischer and Konvalinka \cite{FischerKonvalinka}. Besides, alternating sign matrices have seen strong connections with plane partitions \cite{Doran,BehrendDiFrancescoZinnJustin}, polytopes \cite{StrikerASMpolytope,MeszarosFlowASM} and multiple aspects around Schubert calculus \cite{WeigandtBPD,HuangPDTSSCPP,EKW25}.

Alternating sign matrices also arise as the Dedekind-MacNeille completion, also abbreviated as the MacNeille completion, of the (strong) Bruhat order of the symmetric group $S_n$ \cite{LS96}. It is then natural to ask for the MacNeille completion of the Bruhat order on any parabolic quotients $W^I:=W/W_I$, given by the Bruhat decomposition on the generalized partial flag variety $G/P_I$. Stembridge classified all parabolic quotients that are lattices \cite{StembridgeFullyCommutative}, but beyond that, little is known.

In this paper, we present a complete answer for arbitrary parabolic quotients in type $A$.
\begin{theorem}\label{thm:main}
Let $I\subset[n-1]$ be a subset. The Dedekind-MacNeille completion of the parabolic Bruhat order $S_n^I$ can be realized as
\[\ASM^I(n):=\{A\in\ASM(n)\:|\:r_A(i,j)\in\{r_A(i{-}1,j){+}1,r_A(i{+}1,j)\}\text{ for all }i\in I,j\in[n]\},\]
with the partial order induced from that of $\ASM(n)$.
\end{theorem}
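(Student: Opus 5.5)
We write $r_A(i,j)$ for the corner sum $\sum_{a\le i,\,b\le j}A_{ab}$, with $r_A(0,j)=r_A(i,0)=0$, $r_A(n,j)=j$ and $r_A(i,n)=i$. We order $\ASM(n)$ by $A\le B$ iff $r_A\ge r_B$ entrywise, which restricts to Bruhat order on $S_n$. The plan uses the standard characterization of the MacNeille completion. A finite lattice $L$ containing a poset $P$ as an induced subposet is its MacNeille completion iff every element of $L$ is a join of elements of $P$ and also a meet of elements of $P$. So there are three tasks: (a) identify $S_n^I$ inside $\ASM^I(n)$; (b) show $\ASM^I(n)$ is a lattice; (c) prove join- and meet-density.

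For (a), consider a permutation $w$. Row $i$ contributes to $r_w(\cdot,j)$ iff $w(i)\le j$. So the condition at $(i,j)$ fails exactly when $w(i)>j\ge w(i+1)$. Hence $w\in\ASM^I(n)$ iff $w(i)<w(i+1)$ for all $i\in I$, i.e.\ $w$ is a minimal coset representative, and $\ASM^I(n)\cap S_n=S_n^I$ with the induced order.

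For (b), rewrite the defining condition. Row increments $r_A(i,j)-r_A(i-1,j)$ lie in $\{0,1\}$ for an ASM, so the condition forbids exactly the increment pattern $(0,1)$ at rows $i,i+1$. It is therefore equivalent to the row-concavity inequality
\[2r_A(i,j)\ge r_A(i-1,j)+r_A(i+1,j),\qquad i\in I .\]
By Lacoux--Schützenberger, corner-sum matrices of ASMs are exactly the integer matrices with the right boundary values and unit-increment monotonicity in both directions. Entrywise $\min$ and $\max$ of such matrices are again of this form, and these give the two lattice operations of $\ASM(n)$. Entrywise $\min$ of concave sequences is concave, so $\ASM^I(n)$ is closed under the operation given by $\min$ of rank functions. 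With our convention this is the join. Since $\ASM^I(n)$ is finite, closed under this operation, and contains the element with smallest rank function (the longest element of $S_n^I$), it is a lattice. Its other operation is the $\min$-join of all common lower bounds, and in general this is not the entrywise $\max$.

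For (c), fix $A\in\ASM^I(n)$. Joins are inherited (entrywise $\min$), so join-density reduces to the following claim. For each $(i,j)$ there is a $w\in S_n^I$ with $w\le A$ (that is, $r_w\ge r_A$) and $r_w(i,j)=r_A(i,j)$. Then $\min_w r_w=r_A$. Meet-density is subtler because meets in $\ASM^I(n)$ are not entrywise. Still, if every $w\in S_n^I$ with $w\ge A$ satisfies $r_w\le r_A$, and for each $(i,j)$ some such $w$ attains $r_w(i,j)=r_A(i,j)$, then $A$ is the greatest lower bound in $\ASM^I(n)$ of these $w$. Indeed, any lower bound $B$ has $r_B\ge\max_w r_w=r_A$. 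So both directions reduce to pointwise witness constructions.

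The main obstacle is these witnesses. In $\ASM(n)$ they exist: use bigrassmannian/dominant-type permutations, or the fact that $A$ is the join and meet of permutations. Pushing such a $u$ into $S_n^I$ is the issue. Replacing $u$ by its minimal representative $u^I$ goes down in Bruhat order, so it preserves $u\le A$ but may destroy equality at $(i,j)$. I would instead construct $w$ greedily, block by block over the maximal intervals of consecutive indices in $I$: within each block, fill rows left to right in increasing order. The row-concavity of $r_A$ on $I$ is exactly what should guarantee that the prescribed values can be matched while keeping $w(i)<w(i+1)$. The meet side, where the witnesses must lie above $A$, is where I expect the most care.
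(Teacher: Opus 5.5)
\textbf{Verdict: genuine gap.} Parts (a) and (b), and the two reductions at the start of (c), are correct. The concavity reformulation is a clean way to see that $\ASM^I(n)$ is closed under the entrywise-$\min$ join.

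There is one slip in (b). A finite join-closed poset is a lattice once it has a \emph{bottom} element, so that every pair has a common lower bound. Here the bottom is the identity, whose rank function $\min(i,j)$ is the largest. It is not the longest element of $S_n^I$: that is the top, which a finite join-semilattice has automatically. The fix is immediate from your part (a).

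The real gap is that density, the substance of the theorem, is never proved. You reduce to witnesses at every $(i,j)$ and observe that $u\mapsto u^I$ may destroy equality. You then only sketch a greedy construction that ``should'' work. On the meet side you even leave open whether a witness above $A$ can be moved into $S_n^I$ while staying above $A$.

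The missing idea is that, for elements of $\ASM^I(n)$, the rows indexed by $I$ carry no information. Concavity plus unit increments gives, for $i$ in a maximal block $\{u,\dots,v\}\subseteq I$, the interpolation formula $r_A(i,j)=\min\{r_A(v+1,j),\,r_A(u-1,j)+(i-u+1)\}$. This has two consequences.
\begin{itemize}
\item[(i)] Two elements of $\ASM^I(n)$ are equal iff their rank functions agree on rows outside $I$.
\item[(ii)] For $A\in\ASM^I(n)$ and any $B\in\ASM(n)$, $A\le B$ iff $r_A\ge r_B$ on rows outside $I$. This holds because every $r_B$ satisfies $r_B(i,j)\le\min\{r_B(v+1,j),\,r_B(u-1,j)+(i-u+1)\}$.
\end{itemize}
Also, right multiplication by $s_t$ only swaps rows $t,t+1$ of a permutation matrix, so $r_{u^I}=r_u$ on all rows outside $I$.

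With these facts, single bigrassmannian witnesses suffice. Take $P=P[a,b,r_A(a,b)]\le A\le Q=Q[a,b,r_A(a,b)]$. Then $P^I\le P\le A$ trivially, and $Q^I\ge A$ by (ii).
\begin{itemize}
\item \textbf{Joins.} The entrywise $\min$ of the $r_{P^I}$ is the rank function of an element of $\ASM^I(n)$. It agrees with $r_A$ off $I$, hence equals $A$ by (i).
\item \textbf{Meets.} You additionally need the explicit meet in $\ASM^I(n)$: take the entrywise $\max$ on rows outside $I$ and fill in the rows of $I$ by the interpolation formula. One checks this is a corner sum matrix, and (ii) makes it the greatest lower bound. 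The meet of the $Q^I$ then agrees with $A$ off $I$, so it equals $A$.
\end{itemize}
This is how the paper argues (Lemmas~\ref{lem:ASMI}, \ref{lem:leqI}, \ref{pISn} and Theorem~\ref{thm:meet-operation}).

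Your stronger pointwise claim is in fact true. For instance, take a permutation that matches an entire column of $r_A$, as constructed in the proof of Lemma~\ref{lem:capofasmi} or its dual. Project it to $S_n^I$ and apply the interpolation formula. But rows in $I$ never need to be checked, and that is precisely where your obstacle sits.
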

See Figure~\ref{fig:MCn=4} for an example. In the description in Theorem~\ref{thm:main}, we do allow $r_A(i{-}1,j)+1$ and $r_A(i{+}1,j)$ to be equal, in which case $r_A(i,j)$ has to take on this value. In other words, the tuple $(r_A(i{-}1,j),r_A(i,j),r_A(i{+}1,j))$ is not allowed to be $(k,k,k{+}1)$ for any $i\in I$, $j\in[n]$ and some value $k\in\N$. In the language of \cite{EKW25}, this condition is saying that $A$ has no descent in rows of $I$. 

We remark that although $\ASM^I(n)$ is a subposet of $\ASM(n)$, the lattice operations do not fully agree. With our convention, taking join in $\ASM^I(n)$ agrees with that in $\ASM(n)$ while the meet does not (Theorem~\ref{thm:meet-operation}). One may take the maximal representatives to realize $S_n^I$ as a different subposet of $S_n$, in which case the situation is reversed.

Since our treatment for ASMs will be primarily based on their rank matrices, it is natural to discuss connections with ASM varieties. The following result generalizes Proposition 2.2 and Proposition 2.3 of \cite{EKW25}, with necessary notations provided in Section~\ref{sub:prelim-ASM}.
\begin{proposition}\label{prop:variety}
Let $A_1,\ldots,A_k\in\ASM^I(n)$.
\begin{enumerate}
\item Let $A=A_1\vee\cdots\vee A_k\in\ASM^I(n)$. Then $I_A=I_{A_1}+\cdots+I_{A_k}$ and $X_A=X_{A_1}\cap\cdots\cap X_{A_k}$.
\item Let $A=A_1\wedge_{I}\cdots\wedge_I A_k\in\ASM^I(n)$. If $X_{A_1}\cup\cdots\cup X_{A_k}$ is an $\ASM$ variety, then $I_A=I_{A_1}\cap\cdots\cap I_{A_k}$ and $X_A=X_{A_1}\cup\cdots\cup X_{A_k}$.
\end{enumerate}
\end{proposition}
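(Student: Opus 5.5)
We use the standard facts on ASM varieties, as in \cite{EKW25}. For $A\in\ASM(n)$, the ASM ideal $I_A$ is generated by the minors of size $r_A(i,j)+1$ in the northwest $i\times j$ submatrix of a generic matrix. Its variety is $X_A=\{M:\rk M_{[i],[j]}\le r_A(i,j)\ \forall i,j\}$. The order on $\ASM(n)$ is entrywise comparison of rank matrices, and with our conventions $A\le B$ corresponds to $X_B\subseteq X_A$, i.e.\ $I_A\subseteq I_B$. The join in $\ASM(n)$ is the entrywise minimum of rank functions. The meet in $\ASM(n)$ is obtained from the entrywise maximum by taking the smallest ASM rank function dominating it. The meet $\wedge_I$ in $\ASM^I(n)$ (Theorem~\ref{thm:meet-operation}) is, presumably, the smallest element of $\ASM^I(n)$ lying below the rank function $\max_\ell r_{A_\ell}$, i.e.\ a further correction of the $\ASM(n)$ meet.

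For part (1), Theorem~\ref{thm:meet-operation} says the join in $\ASM^I(n)$ agrees with the join in $\ASM(n)$. So it suffices to prove the statement for $\ASM(n)$, which is Proposition 2.2 of \cite{EKW25}. I would still sketch the argument directly. Since $r_A=\min_\ell r_{A_\ell}$ entrywise, each generator of $I_A$ is a minor of size $\min_\ell r_{A_\ell}(i,j)+1$ in the $i\times j$ corner. Choosing $\ell$ attaining the minimum shows it is a generator of $I_{A_\ell}$. Conversely, every generator of $I_{A_\ell}$ has size at least $r_A(i,j)+1$ in that corner and so lies in $I_A$. Hence $I_A=\sum I_{A_\ell}$. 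Taking vanishing loci gives $X_A=\bigcap X_{A_\ell}$. (ASM ideals are radical by Weigandt/Knutson results, but the set-theoretic statement does not even need this.)

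For part (2), suppose $X_{A_1}\cup\cdots\cup X_{A_k}=X_B$ for some $B\in\ASM(n)$. Since each $X_{A_\ell}\subseteq X_B$, we get $B\le A_\ell$ for all $\ell$, hence $B\le A_1\wedge\cdots\wedge A_k$ in $\ASM(n)$. The key step is to show $B\in\ASM^I(n)$. Granting this, $B$ is a common lower bound of the $A_\ell$ inside $\ASM^I(n)$, so $B\le A$. Also $A\le A_\ell$ gives $X_{A_\ell}\subseteq X_A$, so $X_B=\bigcup X_{A_\ell}\subseteq X_A$, which means $A\le B$. Therefore $A=B$ and $X_A=\bigcup X_{A_\ell}$. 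Since ASM varieties are reduced and their ideals are radical (prime decomposition into Schubert determinantal ideals), we get $I_A=I(X_A)=\bigcap I(X_{A_\ell})=\bigcap I_{A_\ell}$.

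The main obstacle is showing $B\in\ASM^I(n)$, i.e.\ that $X_B$ has no descent in row $i\in I$. I would use the rank-function description $r_B(i,j)=\max_{M\in X_B}\rk M_{[i],[j]}=\max_\ell r_{A_\ell}(i,j)$. This holds because generic points of each component $X_{A_\ell}$ attain the rank bounds, a fact itself from \cite{EKW25}. Then the condition forbidding $(k,k,k+1)$ in column $j$ at row $i$ must be checked for a pointwise maximum. Let $\ell$ achieve $r_B(i+1,j)$. If $r_{A_\ell}(i,j)=r_{A_\ell}(i+1,j)$, then $r_B(i,j)\ge r_B(i+1,j)$, so the triple is not $(k,k,k+1)$. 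Otherwise $r_{A_\ell}(i,j)=r_{A_\ell}(i-1,j)+1$. In that case I would pass to $W_I$-invariance: $X_{A_\ell}$ is stable under the row operation $s_i$ (swapping rows $i,i+1$) exactly when $A_\ell$ has no descent in row $i$. A union of $s_i$-stable varieties is $s_i$-stable, so $X_B$ is $s_i$-stable, and hence $B$ has no descent at $i$. I expect to establish this stability characterization carefully, as the cleanest route.
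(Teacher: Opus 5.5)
Your outline of (2) is the paper's own: writing $X_B=\bigcup_\ell X_{A_\ell}$, you show $B\in\ASM^I(n)$, then squeeze $B\le A$ (meet property in $\ASM^I(n)$) against $A\le B$ (from $X_B\subseteq X_A$). Part (1) and the passage to ideals via radicality also agree with the paper. The gap is the key step $B\in\ASM^I(n)$.

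The half of your stability criterion that you apply to the $A_\ell$ is fine. If $A_\ell$ has no descent in row $i$, then for $Z\in X_{A_\ell}$ the swapped rank $\rk Z_{[i-1]\cup\{i+1\},[j]}$ is at most both $r_{A_\ell}(i-1,j)+1$ and $r_{A_\ell}(i+1,j)$. One of these equals $r_{A_\ell}(i,j)$, so $X_{A_\ell}$, and hence $X_B$, is $s_i$-stable.

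The converse, which is what you need for $B$, is only asserted. Suppose column $j$ of $r_B$ reads $(k,k,k+1)$ in rows $i-1,i,i+1$. You must produce a single point $Z\in X_B$ with $\rk Z_{[i-1],[j]}=k$ and $\rk Z_{[i+1],[j]}=k+1$ simultaneously. Only then does swapping rows push the $(i,j)$ corner rank to $k+1>r_B(i,j)$. The fact you cite, that each bound $r_B(i,j)$ is attained by some point, gives these entries one at a time, possibly at different points. Your own case analysis shows why that cannot suffice: an $\ASM^I$ column $(k-1,k,k+1)$ from one $A_\ell$ and $(k,k,k)$ from another have pointwise maximum $(k,k,k+1)$.

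The missing ingredient is what Lemma~\ref{lem:capofasmi} provides. For any $B\in\ASM(n)$ and column $b$, the paper builds a permutation $w$ as follows. Place the values $1,\dots,b$ on the rows where $r_B(\cdot,b)$ increases, and $b+1,\dots,n$ on the remaining rows, each block in decreasing order from top to bottom. Then $r_w(\cdot,b)=r_B(\cdot,b)$. Moreover $r_w\le r_B$ everywhere, because $r_w$ takes the smallest values any corner sum matrix with this column can have, so $w\in X_B$.

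With this $w$ your stability argument closes. A shorter direct argument is also available: $w$ lies in some $X_{A_\ell}$, which forces $r_{A_\ell}(\cdot,b)=r_B(\cdot,b)$. Since the defining condition of $\ASM^I(n)$ is columnwise, $B$ inherits it.

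So the $s_i$-stability viewpoint is a pleasant geometric reformulation, but it does not avoid this construction. Realizing a whole column (or at least two entries of it) by a single point of $X_B$ is the real content of the step, and your proposal still has to supply it.
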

Here, we use $\vee$ for the join operation in $\ASM^I(n)$ for arbitrary $I$, as they all agree with the join operation in $\ASM(n)$, and we use $\wedge_I$ to denote the meet operation in $\ASM^I(n)$, whose exact calculation is shown in Theorem~\ref{thm:meet-operation}.

There are some notable special cases: when $I=[n-1]\setminus\{k\}$ is maximal, $S_n^I$ is the Young's lattice under the rectangle $k\times(n-k)$, and we expect $\ASM^I(n)=S_n^I$ (Proposition~\ref{prop:I-maximal}); when $I$ is the interval $\{t,t+1,\cdots,n-1\}$, we obtain a clean description for $\ASM^I(n)$:
\begin{proposition}\label{prop:J=1...n-t}
For $I=\{t,t+1,\cdots,n-1\}\subset [n-1]$, $\ASM^I(n)$ is in bijection with the set of monotone triangles of size $(t-1)$ with entries less than or equal to $n$.
\end{proposition}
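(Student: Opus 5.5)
The plan is to show that for $I=\{t,t+1,\ldots,n-1\}$, every $A\in\ASM^I(n)$ is determined by its first $t-1$ rows, and that any choice of those rows extends in exactly one way to a matrix in $\ASM^I(n)$. I will use the standard bijection between $\ASM(n)$ and monotone triangles of size $n$: the $i$-th row of the triangle is the set $S_i=\{j : r_A(i,j)-r_A(i,j-1)=1\}$. Equivalently, $r_A(i,j)=|S_i\cap[j]|$, with the conventions $r_A(0,j)=0$ and $r_A(i,0)=0$. Under this bijection, the sets $S_i$ have $|S_i|=i$, consecutive sets interlace, and $S_n=[n]$, so $r_A(n,j)=j$. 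The map I propose sends $A$ to its truncated triangle $(S_1,\ldots,S_{t-1})$. This is a monotone triangle of size $t-1$ with entries at most $n$.

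\textbf{Step 1: the lower rows are forced.} Fix $A\in\ASM^I(n)$, an index $i\ge t$ and a column $j$. The ASM properties always give $r_A(i,j)\le\min(r_A(i-1,j)+1,\,j)$. I will prove the recursion
\[
r_A(i,j)=\min\bigl(r_A(i-1,j)+1,\;j\bigr).
\]
Suppose $r_A(i,j)\ne r_A(i-1,j)+1$, so $r_A(i,j)=r_A(i-1,j)$. Since $i\in I$, the defining condition of $\ASM^I(n)$ in Theorem~\ref{thm:main} forces $r_A(i+1,j)=r_A(i,j)$. Now $r_A(i+1,j)=r_A(i,j)$ is not of the form $r_A(i,j)+1$. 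So if $i+1\in I$, the same condition applied at row $i+1$ gives $r_A(i+2,j)=r_A(i+1,j)$. Iterating down to row $n$ gives $r_A(i,j)=r_A(n,j)=j$, which proves the recursion. Therefore the rows $t,\ldots,n$ of the rank matrix, and hence $A$, are determined by row $t-1$. This shows the truncation map is injective.

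\textbf{Step 2: surjectivity.} Start from any monotone triangle $(S_1,\ldots,S_{t-1})$ with entries at most $n$. For $i\ge t$, define $S_i=S_{i-1}\cup\{m_i\}$, where $m_i$ is the smallest element of $[n]\setminus S_{i-1}$. In terms of ranks, $|S_i\cap[j]|=|S_{i-1}\cap[j]|+1$ exactly when some element of $[j]$ is missing from $S_{i-1}$, that is, when $|S_{i-1}\cap[j]|<j$. So this construction reproduces the recursion of Step 1. Adding a single element preserves interlacing, and $S_n=[n]$, so the full triangle is a monotone triangle and corresponds to an ASM $A$. It remains to check that $A\in\ASM^I(n)$. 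For $i\ge t$, either $r_A(i,j)=r_A(i-1,j)+1$, or else $r_A(i,j)=r_A(i-1,j)=j$. In the latter case $r_A(i+1,j)=j$ as well, since ranks never exceed $j$ and never decrease in $i$. So the tuple $(k,k,k+1)$ never occurs in a row of $I$.

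The only delicate point is the iteration in Step 1: it must carry the equality all the way down to row $n$ and must not fail when $r_A(i+1,j)$ could a priori equal $r_A(i,j)+1$. This is exactly why $I$ must be a terminal interval ending at $n-1$. The rest is bookkeeping with the ASM–monotone triangle dictionary.
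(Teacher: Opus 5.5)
Your proof is correct and follows the same strategy as the paper: send $A$ to the first $t-1$ rows of its monotone triangle, show that rows $t,\dots,n$ are then forced (each adds the smallest column missing from the previous row), and reverse this construction for surjectivity. The difference is only in how the forcing is derived. The paper works with entries: rows $t,\dots,n-1$ contain no $-1$ (by the argument of Lemma~\ref{lem:In-2}), so each has a single $1$, and these $1$'s move strictly rightward. You instead get the forcing directly from the defining rank condition via the recursion $r_A(i,j)=\min\bigl(r_A(i-1,j)+1,\,j\bigr)$, which is more self-contained, and you verify surjectivity explicitly where the paper only asserts it.
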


This paper is organized as follows. In Section~\ref{sec:prelim}, we introduce the necessary preliminary material. In Section~\ref{sec:proof}, we prove our main theorem, Theorem~\ref{thm:main}, discuss properties of the lattice $\ASM^I(n)$ including an explicit description of the lattice operations (Theorem~\ref{thm:meet-operation}) and also prove Proposition~\ref{prop:variety}. In Section~\ref{sec:further}, we end with special cases, enumerative results and further questions.

\begin{figure}
\centering
\begin{tikzpicture}[scale=0.6]
\def\a{0.8};
\def\b{0.4};
\def\c{1.5};
\def\ts{0.6};
\def\h{1.5};
\def\r{0.2};
\newcommand\Rec[3]{
\node[align=center] at (#1,#2) {#3};
\draw(#1-\a,#2-\b+\r)--(#1-\a,#2+\b-\r);
\draw(#1-\a+\r,#2+\b)--(#1+\a-\r,#2+\b);
\draw(#1+\a,#2+\b-\r)--(#1+\a,#2-\b+\r);
\draw(#1+\a-\r,#2-\b)--(#1-\a+\r,#2-\b);
\draw (#1-\a,#2-\b+\r) arc (180:270:\r);
\draw (#1+\a-\r,#2-\b) arc (270:360:\r);
\draw (#1+\a,#2+\b-\r) arc (0:90:\r);
\draw (#1-\a+\r,#2+\b) arc (90:180:\r);
}
\newcommand\RecBig[2]{
\draw(#1-\c,#2-\c+\r)--(#1-\c,#2+\c-\r);
\draw(#1-\c+\r,#2+\c)--(#1+\c-\r,#2+\c);
\draw(#1+\c,#2+\c-\r)--(#1+\c,#2-\c+\r);
\draw(#1+\c-\r,#2-\c)--(#1-\c+\r,#2-\c);
\draw (#1-\c,#2-\c+\r) arc (180:270:\r);
\draw (#1+\c-\r,#2-\c) arc (270:360:\r);
\draw (#1+\c,#2+\c-\r) arc (0:90:\r);
\draw (#1-\c+\r,#2+\c) arc (90:180:\r);
}
\newcommand\fournumbers[6]{
\node[align=center] at (#1-1.5*\ts,#2) {#3};
\node[align=center] at (#1-0.5*\ts,#2) {#4};
\node[align=center] at (#1+0.5*\ts,#2) {#5};
\node[align=center] at (#1+1.5*\ts,#2) {#6};
}
\RecBig{-2*\c}{0};
\fournumbers{-2*\c}{1.5*\ts}{0}{$+$}{0}{0};
\fournumbers{-2*\c}{0.5*\ts}{$+$}{$-$}{$+$}{0};
\fournumbers{-2*\c}{-0.5*\ts}{0}{0}{0}{$+$};
\fournumbers{-2*\c}{-1.5*\ts}{0}{$+$}{0}{0};
\RecBig{2*\c}{0};
\fournumbers{2*\c}{1.5*\ts}{0}{0}{$+$}{0};
\fournumbers{2*\c}{0.5*\ts}{$+$}{0}{0}{0};
\fournumbers{2*\c}{-0.5*\ts}{0}{$+$}{$-$}{$+$};
\fournumbers{2*\c}{-1.5*\ts}{0}{0}{$+$}{0};

\Rec{-4*\a}{-\c-\h}{$1342$};
\Rec{0*\a}{-\c-\h}{$2143$};
\Rec{4*\a}{-\c-\h}{$3124$};
\Rec{-2*\a}{-\c-2*\h}{$1243$};
\Rec{2*\a}{-\c-2*\h}{$2134$};
\Rec{0*\a}{-\c-3*\h}{$1234$};
\Rec{-4*\a}{\c+\h}{$2341$};
\Rec{0*\a}{\c+\h}{$3142$};
\Rec{4*\a}{\c+\h}{$4123$};
\Rec{-2*\a}{\c+2*\h}{$3241$};
\Rec{2*\a}{\c+2*\h}{$4132$};
\Rec{0*\a}{\c+3*\h}{$4231$};

\draw(-2*\c,-\c)--(-4*\a,-\c-\h+\b);
\draw(-2*\c,-\c)--(0*\a,-\c-\h+\b);
\draw(2*\c,-\c)--(4*\a,-\c-\h+\b);
\draw(2*\c,-\c)--(0*\a,-\c-\h+\b);
\draw(-4*\a,-\c-\h-\b)--(-2*\a,-\c-2*\h+\b);
\draw(0*\a,-\c-\h-\b)--(-2*\a,-\c-2*\h+\b);
\draw(0*\a,-\c-\h-\b)--(2*\a,-\c-2*\h+\b);
\draw(4*\a,-\c-\h-\b)--(2*\a,-\c-2*\h+\b);
\draw(-2*\a,-\c-2*\h-\b)--(0*\a,-\c-3*\h+\b);
\draw(2*\a,-\c-2*\h-\b)--(0*\a,-\c-3*\h+\b);

\draw(-2*\c,\c)--(-4*\a,\c+\h-\b);
\draw(-2*\c,\c)--(0*\a,\c+\h-\b);
\draw(2*\c,\c)--(4*\a,\c+\h-\b);
\draw(2*\c,\c)--(0*\a,\c+\h-\b);
\draw(-4*\a,\c+\h+\b)--(-2*\a,\c+2*\h-\b);
\draw(0*\a,\c+\h+\b)--(-2*\a,\c+2*\h-\b);
\draw(0*\a,\c+\h+\b)--(2*\a,\c+2*\h-\b);
\draw(4*\a,\c+\h+\b)--(2*\a,\c+2*\h-\b);
\draw(-2*\a,\c+2*\h+\b)--(0*\a,\c+3*\h-\b);
\draw(2*\a,\c+2*\h+\b)--(0*\a,\c+3*\h-\b);
\end{tikzpicture}
\caption{The MacNeille completion $\ASM^I(n)$ of $S_n^I$ for $n=4$ and $I=\{2\}$}
\label{fig:MCn=4}
\end{figure}

\section{Preliminaries}\label{sec:prelim}
\subsection{MacNeille completion of posets}\label{sub:MacNeille}
Let $P$ be a poset. The \emph{join} (resp. \emph{meet}) of $x,y\in P$, if it exists, is the unique least upper bound (resp. greatest lower bound), denoted $x\vee y$ (resp. $x\wedge y$). We can similarly define the join and meet of arbitrary subsets. The poset $P$ is called a \emph{lattice} if every pair of elements has a join and a meet, and is called a \emph{complete lattice} if any arbitrary subset has a join and a meet. In case of finite posets, these two notions conincide. An element $x\in P$ is called \emph{join-irreducible} (resp. \emph{meet-irreducible}) if it cannot be written as $s\vee t$ for $s,t<x$ (resp. $s\wedge t$ for $s,t>x$). The following is Proposition~3.3.1 in \cite{EC1}.

\begin{proposition}\label{lattice}
Let $P$ be a finite poset with maximum $\hat1$ (resp. minimum $\hat0$) such that every pair of elements has a meet (resp. join). Then $P$ is a lattice.
\end{proposition}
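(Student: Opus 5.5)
We treat the case of a maximum $\hat1$ in which every pair has a meet; the dual case (minimum $\hat0$, every pair has a join) follows by applying this argument to the dual poset $P^{*}$. Since $P$ is finite, it suffices to show that every pair $x,y\in P$ has a join, because meets already exist by hypothesis. The plan is to realize the join as the meet of the set of all common upper bounds.

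Fix $x,y\in P$ and let $U=\{z\in P: z\ge x,\ z\ge y\}$. First, $U$ is nonempty because $\hat1\in U$. Second, $U$ is finite, say $U=\{z_1,\dots,z_m\}$, because $P$ is finite. We then define $u=z_1\wedge z_2\wedge\cdots\wedge z_m$, computing it iteratively as $(\cdots((z_1\wedge z_2)\wedge z_3)\cdots)\wedge z_m$. Each step uses only the existence of pairwise meets.

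By induction on the number of terms, this iterated meet is the greatest lower bound of $\{z_1,\dots,z_m\}$. The inductive step is that a lower bound of $w\wedge z$ and of $z'$ is precisely a lower bound of $w$, $z$ and $z'$.

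Next we show $u$ is the join of $x$ and $y$.
\begin{itemize}
\item Since $x\le z_i$ for every $i$, the element $x$ is a lower bound of $U$, hence $x\le u$. Likewise $y\le u$, so $u\in U$.
\item If $z$ is any upper bound of $x$ and $y$, then $z\in U$, so $u\le z$.
\end{itemize}
Therefore $u$ is the least upper bound $x\vee y$, and $P$ is a lattice.

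The only point needing care is the claim that iterated pairwise meets give the meet of a finite set, which is a routine induction. Nonemptiness of $U$, guaranteed by $\hat1$, is the essential use of the hypothesis. Completeness is not needed separately, since the paper notes that for finite posets being a lattice and being a complete lattice coincide.
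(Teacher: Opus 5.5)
Your proof is correct. You construct $x\vee y$ as the iterated pairwise meet of the finite set of common upper bounds, which is nonempty because it contains $\hat1$, and handle the other case by dualizing; this is the standard argument. The paper gives no proof of its own and simply cites Proposition~3.3.1 of \cite{EC1}, whose proof is this same construction.
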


The \emph{Dedekind-MacNeille completion} of a poset $P$ is the smallest complete lattice which contains $P$ as an order embedding. For a lattice $L$ containing $P$, we say that $P$ is \emph{join-dense} (resp. \emph{meet-dense}) in $L$, if for each $x\in L$, $x=\bigvee_{y\in P,y\leq x}y$ (resp. $x=\bigwedge_{y\in P,y\geq x}y$). Here is a useful tool for checking MacNeille completion.
\begin{proposition}[\cite{BB67}]\label{prop:MacNeille0}
Let $P$ be a poset embedded in a lattice $L$, then $L$ is the MacNeille completion of $P$ if and only if $P$ is both meet-dense and join-dense in $L$.
\end{proposition}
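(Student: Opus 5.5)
The plan is to compare $L$ with the standard concrete model of the completion, namely the lattice $\mathrm{DM}(P)$ of \emph{cuts}. For $S\subseteq P$ write $S^u$ for the set of upper bounds of $S$ in $P$ and $S^\ell$ for the set of lower bounds. A cut is a subset $A\subseteq P$ with $(A^u)^\ell=A$. Cuts ordered by inclusion form a complete lattice: meets are intersections, and the join of a family is $\bigl((\bigcup A_i)^u\bigr)^\ell$. The map $p\mapsto {\downarrow}p=\{q\in P: q\le p\}$ is an order embedding of $P$ into $\mathrm{DM}(P)$.

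First I will record the standard facts about this model.
\begin{itemize}
\item Every cut satisfies $A=\bigvee_{p\in A}{\downarrow}p$ and $A=\bigwedge_{p\in A^u}{\downarrow}p$, so $P$ is join-dense and meet-dense in $\mathrm{DM}(P)$.
\item $\mathrm{DM}(P)$ embeds into every complete lattice $M$ containing $P$, via $A\mapsto\bigvee_M A$. This map is order preserving. It is injective because $A$ is recovered as $\{p\in P: p\le \bigvee_M A\}$: if $p\le\bigvee_M A$, then $p$ lies below every upper bound of $A$ in $P$, so $p\in (A^u)^\ell=A$.
\end{itemize}
So $\mathrm{DM}(P)$ is the smallest complete lattice containing $P$, and "the MacNeille completion" means a lattice isomorphic to $\mathrm{DM}(P)$ over $P$. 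Throughout I will assume $L$ is complete; this is automatic in the finite setting of the paper, and otherwise it is part of what "completion" requires.

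For the direction ($\Leftarrow$), suppose $P$ is join-dense and meet-dense in $L$. Define $\phi:L\to \mathrm{DM}(P)$ by $\phi(x)=\{p\in P: p\le x\}$.
\begin{itemize}
\item \emph{$\phi(x)$ is a cut.} Meet-density gives $x=\bigwedge\{q\in P: q\ge x\}$, so $p\le x$ if and only if $p\le q$ for all $q\in P$ with $q\ge x$. Join-density gives $x=\bigvee\phi(x)$, so $\phi(x)^u=\{q\in P:q\ge x\}$. Together these give $(\phi(x)^u)^\ell=\phi(x)$.
\item \emph{$\phi$ is an order embedding.} It is clearly order preserving. If $\phi(x)\subseteq\phi(y)$, then join-density gives $x=\bigvee\phi(x)\le\bigvee\phi(y)=y$.
\item \emph{$\phi$ is surjective.} Given a cut $A$, set $x=\bigvee_L A$. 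The injectivity argument above shows $\phi(x)=A$.
\end{itemize}
Hence $\phi$ is an isomorphism. It restricts to $p\mapsto{\downarrow}p$ on $P$, so $L$ is the MacNeille completion.

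For the direction ($\Rightarrow$), suppose $L$ is the MacNeille completion. Then there is an isomorphism $L\cong\mathrm{DM}(P)$ fixing $P$. Join-density and meet-density are invariant under such isomorphisms, and both hold in $\mathrm{DM}(P)$ by the first bullet above.

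The main obstacle I expect is definitional rather than technical. "Smallest complete lattice containing $P$" has to be read up to isomorphism over $P$, and the step that makes the two directions work is the identification of $\mathrm{DM}(P)$ as this smallest object. Both density conditions are genuinely needed in ($\Leftarrow$): meet-density makes $\phi(x)$ closed, and join-density makes $\phi$ injective. I will take care to invoke each exactly at the point where it is used.
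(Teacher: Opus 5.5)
Your proof is correct. The paper gives no argument of its own here: it quotes the statement from \cite{BB67} without proof. Your route is the standard elementary one. You build the lattice of cuts $\mathrm{DM}(P)$, show $P$ is join- and meet-dense in it, and show it order-embeds over $P$ into every complete lattice containing $P$. You then prove that any complete $L$ in which $P$ is doubly dense is isomorphic to $\mathrm{DM}(P)$ over $P$ via $\phi(x)=\{p\in P:p\le x\}$. Surjectivity comes from the recovery formula $A=\{p\in P: p\le\bigvee_L A\}$.

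Compared with the citation, your argument is self-contained, and it exposes a hidden hypothesis. Read literally, with ``$L$ a lattice'', the proposition fails for infinite $P$. For example, $P=L=\mathbb{Q}$ is trivially join- and meet-dense in itself but is not complete. Your explicit assumption that $L$ is complete is exactly what is needed, and it is harmless in the paper, where everything is finite.

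Two small remarks:
\begin{enumerate}
\item Closedness of $\phi(x)$ needs only meet-density. The inclusion $\phi(x)^u\supseteq\{q\in P:q\ge x\}$ always holds, so $\phi(x)^{u\ell}\subseteq\{q\in P:q\ge x\}^{\ell}=\phi(x)$. Join-density is then used only for injectivity.
\item Suppose one reads the paper's definition (``smallest complete lattice containing $P$'') literally, as ``order-embeds over $P$ into every complete lattice containing $P$''. Then your ($\Rightarrow$) direction needs one more line, and your own tools supply it. An embedding $\psi:L\to\mathrm{DM}(P)$ over $P$ satisfies ${\downarrow}a\subseteq\psi(\bigvee_L A)\subseteq{\downarrow}q$ for all $a\in A$ and $q\in A^u$. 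Hence $\psi(\bigvee_L A)=A$ for every cut $A$, so $\psi$ is onto and $L\cong\mathrm{DM}(P)$ over $P$. This closes the definitional gap you flagged.
\end{enumerate}
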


\subsection{Permutations and alternating sign matrices}\label{sub:Bruhat}
Let $S_n$ be the symmetric group of permutations. We write a permutation $w\in S_n$ via its one-line notation $w(1)w(2)\cdots w(n)$. We also represent a permutation $w$ by its \emph{permutation matrix} $\dot w$, with $1$'s in the entries $(i,w(i))$ for $i\in[n]$, and $0$'s elsewhere. 
The group $S_n$ is generated by the set of \emph{simple reflections} $S=\{s_i:=(i\ i{+}1)\:|\:i\in[n-1]\}$. Denote by $T=\{t_{ij}:=(i\ j)\:|\:1\leq i<j\leq n\}$ the set of \emph{reflections}. For $w\in S_n$, its \emph{length} $\ell(w)$ is the smallest $\ell$ such that $w$ can be written as a product of simple reflections.

The \emph{Bruhat order} on $S_n$ is the partial order generated by $w<wt_{ij}$ if $\ell(w)<\ell(wt_{ij})$. There are many properties and characterizations for the Bruhat order, and we will use the one most compatible with alternating sign matrices. For $w\in S_n$, its \emph{rank matrix} is defined by $r_w(i,j):=|\{k\leq i\:|\: w(k)\leq j\}|$. For example, the permutation matrix and the rank matrix for $w=1342$ are \[\dot w=\begin{bmatrix}
1&0&0&0\\0&0&1&0\\0&0&0&1\\0&1&0&0
\end{bmatrix},\quad r_w=\begin{bmatrix}
1&1&1&1\\1&1&2&2\\1&1&2&3\\1&2&3&4
\end{bmatrix}.\]

The following result is very classical. See for example \cite{BB05}.
\begin{theorem}\label{thm:Rankcri}
For $u,w\in S_n$, $u\leq w$ if and only if $r_u(i,j)\geq r_w(i,j)$ for all $i,j\in[n]$.
\end{theorem}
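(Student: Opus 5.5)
The statement is the classical rank-matrix criterion: $u\le w$ in Bruhat order if and only if $r_u(i,j)\ge r_w(i,j)$ for all $i,j$. I would prove the two directions separately, both through the effect of a single transposition on the rank matrix.

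\textbf{Forward direction.} Since the Bruhat order is generated by relations $u<ut_{ab}$ with $\ell(u)<\ell(ut_{ab})$, it suffices to treat one such covering-type step. Take $a<b$ and put $w=ut_{ab}$. The condition $\ell(u)<\ell(w)$ is equivalent to $u(a)<u(b)$.

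The permutation $w$ moves the value $u(a)$ from row $a$ to row $b$, and the value $u(b)$ from row $b$ to row $a$. Counting directly gives
\[
r_w(i,j)=r_u(i,j)-1 \quad \text{if } a\le i<b \text{ and } u(a)\le j<u(b),
\]
and $r_w(i,j)=r_u(i,j)$ otherwise. So the rank matrix weakly decreases along each step, and transitivity gives the forward implication.

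\textbf{Reverse direction.} I would induct on $D=\sum_{i,j}\bigl(r_u(i,j)-r_w(i,j)\bigr)\ge 0$. If $D=0$, then $r_u=r_w$, and since the rank matrix determines the permutation, $u=w$.

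If $D>0$, then $u\ne w$. The goal is to find a transposition $t_{ab}$ with $u(a)<u(b)$ such that $u'=ut_{ab}$ still satisfies $r_{u'}\ge r_w$ entrywise. Then $u<u'$, and $u'$ has a strictly smaller $D$. By induction $u'\le w$, hence $u\le w$.

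\textbf{Choosing the transposition (the main obstacle).}
\begin{enumerate}
\item Let $a$ be the smallest index with $u(a)\ne w(a)$. Rows above $a$ agree, and $r_u(a,j)\ge r_w(a,j)$ for all $j$, so $u(a)<w(a)$.
\item Among the indices $b>a$ with $u(a)<u(b)\le w(a)$, choose $b$ to be the first such index. Such $b$ exists: the value $w(a)$ occurs in $u$ at some row below $a$, because $u$ and $w$ agree in all rows above $a$ and $u(a)\ne w(a)$.
\item The only entries of $r_{u'}$ that drop are those in the box $a\le i<b$, $u(a)\le j<u(b)$, and each drops by exactly $1$. We must show $r_u(i,j)>r_w(i,j)$ on this box.
\item Fix $(i,j)$ in the box. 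Compare the rows $1,\dots,i$ of $u$ and $w$ up to column $j$. Row $a$ contributes $1$ for $u$, since $u(a)\le j$, and $0$ for $w$, since $w(a)\ge u(b)>j$.
\item The minimality of $b$ says no row strictly between $a$ and $b$ of $u$ has its value in $(u(a),u(b))$. Combined with $r_u(i,j)\ge r_w(i,j)$ at the column $j=u(a)-1$, this should force the strict inequality $r_u(i,j)>r_w(i,j)$ throughout the box.
\end{enumerate}
Verifying step 5 is the delicate part. If that choice of $b$ fails, I would fall back on taking $b$ with $u(b)$ minimal above $u(a)$ among the candidates, and check the box condition directly.
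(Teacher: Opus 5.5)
The paper does not prove this statement; it cites it as classical from \cite{BB05}. Your outline is the standard textbook argument: compute the box for a single transposition, then climb from $u$ towards $w$ by transpositions that keep $r_{u'}\ge r_w$. The forward direction and steps 1--4 are correct. The gap is step 5, which carries the whole reverse direction.

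The ingredients you name in step 5 cannot give the strict inequality. You use only two facts: no $u(k)$ with $a<k<b$ lies in $(u(a),u(b))$, and the hypothesis at column $u(a)-1$. Both are consistent with failure. Take $u=132$ and $w=312$, so $r_u\ge r_w$, $a=1$, $u(a)=1$, $w(a)=3$, and take $b=3$. The interval $(1,2)$ contains no value, and column $u(a)-1=0$ carries no information. Yet inside the box $r_u(2,1)=1=r_w(2,1)$, and indeed $ut_{13}=231\not\le 312$. Here $b=3$ is exactly your fallback choice (minimal value $u(b)$), so the fallback is not merely unverified but wrong. Your primary choice gives $b=2$ and $ut_{12}=w$.

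The fix has two parts, and it keeps your primary choice of $b$.
\begin{enumerate}
\item Use the full strength of the minimality of $b$ as an index: $u(k)\notin(u(a),w(a)]$ for all $a<k<b$.
\item Compare at column $w(a)$, where row $a$ contributes $1$ to both $r_u$ and $r_w$.
\end{enumerate}
For $(i,j)$ in the box, rows above $a$ cancel, so
\[
r_u(i,j)-r_w(i,j)=1+|\{a<k\le i: u(k)\le j\}|-|\{a<k\le i: w(k)\le j\}|.
\]
Since $i<b$ and $u(a)\le j<w(a)$, part 1 gives
\[
|\{a<k\le i: u(k)\le j\}|=|\{a<k\le i: u(k)\le w(a)\}|.
\]
The hypothesis at $(i,w(a))$, after cancelling rows $1,\dots,a$, gives
\[
|\{a<k\le i: u(k)\le w(a)\}|\ge|\{a<k\le i: w(k)\le w(a)\}|\ge|\{a<k\le i: w(k)\le j\}|.
\]
Hence $r_u(i,j)-r_w(i,j)\ge 1$ on the whole box. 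This shows $r_{ut_{ab}}\ge r_w$, and your induction on $D$ then goes through.
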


For a subset $I\subset[n-1]$, the \emph{parabolic subgroup} $S_n(I)$ is generated by $\{s_i\:|\: i\in I\}$. We identify the \emph{parabolic quotient} $S_n/S_n(I)$ as the set of minimal representatives $S_n^I:=\{w\in S_n\:|\: w(i)<w(i+1)\text{ for all }i\in I\}$. They inherit the Bruhat order from $S_n$. Given $I\subset[n-1]$, every $w\in S_n$ admits a \emph{parabolic decomposition} $w=w^Iw_I$ that is length-additive such that $w^I\in S_n^I$ and $w_I\in S_n(I)$. 

For an alternating sign matrix $A\in\ASM(n)$, its \emph{rank matrix} is defined similarly as $r_A(i,j)=\sum_{p=1}^i\sum_{q=1}^j A_{p,q}$. The \emph{Bruhat order} on $\ASM(n)$ extends naturally by $A\leq B$ if $r_A(i,j)\geq r_B(i,j)$ for all $i,j\in[n]$. These matrices are also called the \emph{corner sum matrices}.
\begin{definition}\label{def:CSM}
An $n\times n$ \emph{corner sum matrix} $C=(c_{ij})_{i,j=1}^n$ is a square matrix with entries in $\N$ such that
\begin{itemize}
\item $c_{n,n}=n$,
\item $c_{i,j}-c_{i-1,j}\in\{0,1\}$ for all $i,j\in[n]$, and
\item $c_{i,j}-c_{i,j-1}\in\{0,1\}$ for all $i,j\in[n]$,
\end{itemize}
with the convention that $c_{i,0}=c_{0,j}=0$ for $i,j\in[n]$.
\end{definition}
Write $\CSM(n)$ for the set of $n\times n$ corner sum matrices. Taking the rank matrix provides a bijection from $\ASM(n)$ to $\CSM(n)$. The lattice structure on $\CSM(n)$ is given by entrywise operations. The meet and join are calculated by taking the entrywise maximum and minimum, respectively.

There are other combinatorial objects in bijection with $\ASM(n)$.
\begin{definition}
A \emph{monotone triangle} of size $n$ is a triangular array $(m_{i,j})_{1 \leq j \leq i \leq n}$ consisting of integers such that:
\begin{itemize}
\item $m_{i,j}\leq m_{i-1,j}\leq m_{i,j+1}$ for all $1\leq j<i\leq n$,
\item $m_{i,j}<m_{i,j+1}$ for all $1\leq j<i\leq n$.
\end{itemize}
\end{definition}
Given $A=(a_{ij})_{i,j=1}^n\in\ASM(n)$, we can construct a monotone triangle $(m_{i,j})_{1\leq j\leq i\leq n}$ where its $i^{th}$ row $m_{i,1}<m_{i,2}<\cdots<m_{i,i}$ consists of the column indices $c$ such that $a_{1,c}+a_{2,c}+\cdots+a_{i,c}=1$. This provides a bijection between $\ASM(n)$ and monotone triangles with bottom row $(m_{n,1},\ldots,m_{n,n})=(1,2,\ldots,n)$. An example is shown in Figure~\ref{fig:bijection-ASM-MT-example}.
\begin{figure}[h!]
\centering
\[\begin{pmatrix}
    0&1&0&0&0&0\\
    0&0&0&1&0&0\\
    1&0&0&-1&1&0\\
    0&0&1&0&0&0\\
    0&0&0&1&-1&1&\\
    0&0&0&0&1&0
\end{pmatrix}\qquad\begin{array}{c}
 2 \\
 2 \quad 4 \\
 1 \quad 2 \quad 5 \\
1 \quad 2 \quad 3 \quad 5 \\
1 \quad 2 \quad 3 \quad 4 \quad 6 \\
1 \quad 2 \quad 3 \quad 4 \quad 5 \quad 6\end{array}\]
\caption{Bijection between alternating sign matrices (left) and monotone triangles (right)}
\label{fig:bijection-ASM-MT-example}
\end{figure}

The join-irreducible elements in $S_n$ and in $\ASM(n)$ are precisely the bigrassmannian permutations \cite{LS96}. In this paper, we use the notation $P[a,b,c]\in S_n$ for them, where $a,b,c\in\N$, $a+b-n\leq c\leq a,b$, defined as
\[P[a,b,c](i)=\begin{cases}
    i, & i\leq c\text{ or }i> a+b-c, \\
    b-c+i, & c<i\leq a, \\
    -a+c+i, & a<i\leq a+b-c.
\end{cases}\]
Similarly, we also define the following permutations, for $(a,b,c)$ in the same range:
\[Q[a,b,c](i)=\begin{cases}
    n+1-i, & i\leq a-c\text{ or }i>n-b+c,\\
    a+b-c+1-i, & a-c<i\leq a,\\
    n+c+1-i, & a<c\leq n-b+c.
\end{cases}\]
These permutations are shown pictorially in Figure~\ref{fig:PQ-matrix}, where $\mathbf{I}_m$ is the $m\times m$ identity matrix, and $\mathbf{J}_m$ is the $m\times m$ matrix with $1$'s on the antidiagonal, and $0$'s elsewhere. 
\begin{figure}[h!]
\centering
\[
\begin{pmatrix}
\mathbf{I}_c & 0 & 0 & 0 \\
0 & 0 & \mathbf{I}_{a - c} & 0 \\
0 & \mathbf{I}_{b-c} & 0 & 0 \\
0 & 0 & 0 & \mathbf{I}_{n-a-b+c}
\end{pmatrix}\qquad
\begin{pmatrix}
0 & 0 & 0 &\mathbf{J}_{a-c}\\
0 & \mathbf{J}_{c} & 0 & 0 \\
0 & 0 & \mathbf{J}_{n-a-b+c} & 0 \\
\mathbf{J}_{b-c} & 0 & 0 & 0
\end{pmatrix}
\]
\caption{The join-irreducible elements $P[a,b,c]$ (left) and meet-irreducible elements $Q[a,b,c]$ (right)}
\label{fig:PQ-matrix}
\end{figure}
The following technical results can be found in \cite{LS96}.
\begin{proposition}\label{prop:joinirr-meetirr}
Let $a,b,c\in\N$ with $a+b-n\leq c\leq a,b$, then we have
\begin{enumerate}
\item $r_{P[a,b,c]}(a,b)=r_{Q[a,b,c]}(a,b)=c$.
\item For any $M\in\ASM(n)$ such that $r_M(a,b)=c$, $P[a,b,c]\leq M\leq Q[a,b,c]$.
\item $P[a,b,c]$ and $Q[a,b,c]$ are join-irreducible and meet-irreducible elements in $\ASM(n)$ respectively.
\end{enumerate}
\end{proposition}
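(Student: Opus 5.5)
The plan is to work entirely with rank matrices (corner sum matrices), using that the order on $\ASM(n)$ is reverse entrywise comparison of $r_A$. Under this order, join in $\CSM(n)$ is the entrywise minimum and meet is the entrywise maximum. The three parts will be handled in the order (1), (2), (3), with (3) following formally from (2).

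For (1), I would read the rank off the block pictures in Figure~\ref{fig:PQ-matrix}.
\begin{itemize}
\item For $P[a,b,c]$: the first $a$ rows consist of $\mathbf{I}_c$ in columns $1,\dots,c$ and $\mathbf{I}_{a-c}$ in columns $b+1,\dots,b+a-c$. Only the first block lies in columns $\le b$, so $r(a,b)=c$.
\item For $Q[a,b,c]$: the first $a$ rows consist of $\mathbf{J}_{a-c}$ in the last $a-c$ columns and $\mathbf{J}_c$ in columns $b-c+1,\dots,b$. Since $c\ge a+b-n$, the last $a-c$ columns all exceed $b$. Hence again $r(a,b)=c$.
\end{itemize}

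For (2), the key step is to identify $r_{P[a,b,c]}$ as the entrywise largest and $r_{Q[a,b,c]}$ as the entrywise smallest corner sum matrix with $(a,b)$-entry $c$.
\begin{itemize}
\item Upper bound: for any $M$ with $r_M(a,b)=c$, the $0/1$ increment conditions of Definition~\ref{def:CSM} give $r_M(i,j)\le \min(i,j)$. Moving from $(a,b)$ to $(i,j)$, each step down or right increases the rank by at most $1$, and steps up or left never increase it. Hence
\[U(i,j):=\min\bigl(i,\;j,\;c+(i-a)^++(j-b)^+\bigr)\ge r_M(i,j).\]
\item Lower bound: dually, using $r_M(i,j)\ge \max(0,i+j-n)$ (the complement rows and columns can hold at most $n-i$ and $n-j$ ones) and that each step up or left decreases the rank by at most $1$, we get
\[L(i,j):=\max\bigl(0,\;i+j-n,\;c-(a-i)^+-(b-j)^+\bigr)\le r_M(i,j).\]
\item Equality: I would then verify $r_{P[a,b,c]}=U$ and $r_{Q[a,b,c]}=L$ by a case check over the four block regions of rows $(\le c,\ c..a,\ a..a{+}b{-}c,\ >a{+}b{-}c)$ and the analogous column ranges.
\end{itemize}
Given these equalities, $r_{Q}\le r_M\le r_{P}$ entrywise, which means exactly $P[a,b,c]\le M\le Q[a,b,c]$. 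This case verification is the main (though routine) obstacle: the regions where the three terms of the min or max switch must be matched against the blocks carefully.

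For (3), suppose $P[a,b,c]=M\vee N$ with $M,N<P[a,b,c]$. Since join is the entrywise minimum of rank matrices,
\[c=r_P(a,b)=\min\bigl(r_M(a,b),r_N(a,b)\bigr),\]
so, say, $r_M(a,b)=c$. Then (2) gives $P[a,b,c]\le M$, contradicting $M<P[a,b,c]$. Dually, if $Q[a,b,c]=M\wedge N$ with $M,N>Q$, then the entrywise maximum at $(a,b)$ forces one of them, say $M$, to have $r_M(a,b)=c$. By (2) this gives $M\le Q[a,b,c]$, again a contradiction.
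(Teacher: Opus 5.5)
Your proof is correct, and the case check you leave as routine does go through. The paper gives no proof of this proposition; it quotes it from Lascoux--Sch\"utzenberger \cite{LS96}. So your argument is a self-contained replacement for a citation, not a variant of an argument in the text.

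The heart of your approach is the identification
\[
r_{P[a,b,c]}(i,j)=\min\bigl(i,\,j,\,c+(i-a)^++(j-b)^+\bigr),\qquad r_{Q[a,b,c]}(i,j)=\max\bigl(0,\,i+j-n,\,c-(a-i)^+-(b-j)^+\bigr).
\]
Both formulas hold in every block region. For instance, for $c<i\le a$ one finds $r_{P[a,b,c]}(i,j)=\min(c,j)+\min\bigl(i-c,(j-b)^+\bigr)$, which equals $U(i,j)$ both when $j\le b$ and when $j>b$.

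You can halve that verification with a symmetry. Reversing the columns, $M\mapsto M'$ with $r_{M'}(i,j)=i-r_M(i,n-j)$, is an order-reversing involution of $\ASM(n)$. It sends $P[a,n-b,a-c]$ to $Q[a,b,c]$, and $(a,n-b,a-c)$ is admissible because $a+b-n\le c\le a,b$. It also turns the formula for $r_{P[a,n-b,a-c]}$ into your $L$.

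Your route also gives more than the citation. The bound $U\ge r_M$ uses only $r_M(a,b)\le c$, and $L\le r_M$ uses only $r_M(a,b)\ge c$. So you actually prove the sharper characterizations
\[
P[a,b,c]\le M\iff r_M(a,b)\le c,\qquad M\le Q[a,b,c]\iff r_M(a,b)\ge c.
\]
Part (3) then follows formally from the entrywise min/max description of the lattice operations, exactly as you wrote.

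One minor point concerns the statement rather than your proof. When $c=\min(a,b)$, $P[a,b,c]$ is the identity, and when $c=\max(0,a+b-n)$, $Q[a,b,c]=w_0$. In these cases (3) holds only vacuously under the paper's definition of join- and meet-irreducibility, and your argument covers them without change. Under the also-common convention that $\hat0$ is not join-irreducible and $\hat1$ is not meet-irreducible, these cases would have to be excluded.
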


\subsection{Matrix Schubert varieties}\label{sub:prelim-ASM}
Given $A \in \ASM(n)$, we can associate an \emph{ASM variety}
\[
X_A:=\{Z=(z_{i,j})_{i,j=1}^n\in\mathrm{Mat}_{n\times n}\mid\rk(Z_{[i],[j]})\leq r_A(i,j)\text{ for all }i,j \in [n]\}\]
where $Z_{I,J}$ denotes the submatrix with rows indexed by $I$ and columns indexed by $J$. Denote by $J_{i,j,k}$ the ideal generated by the size $k$-minors of $Z_{[i],[j]}$. Then the corresponding ideal of $X_A$ is $I_A=\sum_{i,j\in[n]}J_{i,j,r_A(i,j)+1}$ with $V(I_A)=X_A$.

The following theorem shows how ASM varieties behave under intersection and union.
\begin{theorem}[\cite{EKW25}]\label{thm:asmvar}
Let $A_1,\ldots,A_k\in\ASM(n)$.
\begin{enumerate}
\item Let $A=A_1\vee\cdots\vee A_k\in\ASM(n)$. Then $I_A=I_{A_1}+\cdots+I_{A_k}$ and $X_A=X_{A_1}\cap\cdots\cap X_{A_k}$.
\item Let $A=A_1\wedge\cdots\wedge A_k\in\ASM(n)$. If $X_{A_1}\cup\cdots\cup X_{A_k}$ is an $\ASM$ variety, then $I_A=I_{A_1}\cap\cdots\cap I_{A_k}$ and $X_A=X_{A_1}\cup\cdots\cup X_{A_k}$.
\end{enumerate}
\end{theorem}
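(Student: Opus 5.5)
Both parts reduce to comparing rank functions, using that the lattice operations on $\ASM(n)$ become entrywise operations on $\CSM(n)$. The join $A=A_1\vee\cdots\vee A_k$ has $r_A(i,j)=\min_l r_{A_l}(i,j)$. The meet $A=A_1\wedge\cdots\wedge A_k$ has $r_A(i,j)=\max_l r_{A_l}(i,j)$. Part~(1) is then pure ideal bookkeeping. Part~(2) uses a containment criterion for ASM varieties together with radicality of ASM ideals.

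\textbf{Part (1).} I will use the standard fact that $J_{i,j,m'}\subseteq J_{i,j,m}$ whenever $m'\geq m$, since by Laplace expansion every $m'$-minor is a combination of $m$-minors. For each $(i,j)$, choose $l$ with $r_{A_l}(i,j)=r_A(i,j)$. Then $J_{i,j,r_A(i,j)+1}=J_{i,j,r_{A_l}(i,j)+1}\subseteq I_{A_l}$, which gives $I_A\subseteq\sum_l I_{A_l}$. Conversely, $r_{A_l}(i,j)\geq r_A(i,j)$ gives $J_{i,j,r_{A_l}(i,j)+1}\subseteq J_{i,j,r_A(i,j)+1}\subseteq I_A$, so $\sum_l I_{A_l}\subseteq I_A$. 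Taking zero sets, $V(\sum_l I_{A_l})=\bigcap_l V(I_{A_l})$, so $X_A=\bigcap_l X_{A_l}$.

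\textbf{Part (2).} The key lemma is that $X_C\subseteq X_D$ if and only if $r_C\leq r_D$ entrywise.
\begin{itemize}
\item The direction $r_C\leq r_D\Rightarrow X_C\subseteq X_D$ is immediate from the definition of ASM varieties.
\item For the converse, I claim that for each $(i,j)$ there is a point $Z\in X_C$ with $\rk Z_{[i],[j]}=r_C(i,j)$, which then forces $r_C(i,j)\leq r_D(i,j)$.
\item To build $Z$, use that $S_n$ is meet-dense in its MacNeille completion $\ASM(n)$ (Proposition~\ref{prop:MacNeille0}). Hence $C=\bigwedge_{w\in S_n,\,w\geq C}w$, so $r_C(i,j)=\max_{w\geq C}r_w(i,j)$.
\item Pick $w\geq C$ attaining this maximum at $(i,j)$. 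By Theorem~\ref{thm:Rankcri} and the rank definition of the order, $r_w\leq r_C$ entrywise, so the permutation matrix $\dot w$ lies in $X_C$ and has $\rk\dot w_{[i],[j]}=r_w(i,j)=r_C(i,j)$.
\end{itemize}

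Now suppose $X_{A_1}\cup\cdots\cup X_{A_k}=X_B$ for some $B\in\ASM(n)$. Since $r_{A_l}\leq r_A$ for every $l$, we get $\bigcup_l X_{A_l}\subseteq X_A$. On the other hand, each $X_{A_l}\subseteq X_B$, so the lemma gives $r_{A_l}\leq r_B$ for all $l$. Hence $r_A=\max_l r_{A_l}\leq r_B$, and therefore $X_A\subseteq X_B=\bigcup_l X_{A_l}$. This proves $X_A=\bigcup_l X_{A_l}$.

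For the ideals, $\bigcap_l I_{A_l}$ is radical once each $I_{A_l}$ is radical, and its zero set is $\bigcup_l X_{A_l}=X_A=V(I_A)$. By the Nullstellensatz, $\bigcap_l I_{A_l}=I_A$ provided $I_A$ is also radical.

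\textbf{Main obstacle.} The step I expect to be hardest is radicality of ASM ideals $I_C$, which is not elementary. I would import it from the literature rather than prove it. One route is to write $I_C$ as a sum of Schubert determinantal ideals $I_w$, using part (1) and join-density of bigrassmannians; sums of such ideals are radical by Knutson's Frobenius-splitting results, as used in \cite{EKW25}.
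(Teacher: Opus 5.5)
Your argument is correct. The paper itself gives no proof of this statement: it is quoted from \cite{EKW25} and used as a black box, for instance in the proof of Proposition~\ref{prop:variety}(1). So there is no internal argument to match, and what you have written is a valid self-contained reconstruction. Its only non-elementary input is radicality of ASM ideals. You correctly isolate this as needed only for the ideal equality in part (2); part (1) is exact ideal bookkeeping and needs no radicality.

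Two comparisons with the rest of the paper are worth noting.

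\textbf{Your containment criterion.} The fact $X_C\subseteq X_D\iff r_C\leq r_D$ is exactly what the paper uses without justification in its proof of Proposition~\ref{prop:variety}(2). There it writes ``$X_{A_t}\subset X_U$ implies $A_t\geq U$'' and ``$A\leq_I B$ if and only if $X_A\supset X_B$''. Your lemma therefore fills in a step the paper leaves implicit.

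\textbf{The witness.} You do not need meet-density of $S_n$ in $\ASM(n)$ to produce it. By Proposition~\ref{prop:joinirr-meetirr}(1),(2), the single permutation $w=Q[i,j,r_C(i,j)]$ satisfies $C\leq w$ and $r_w(i,j)=r_C(i,j)$. This is the same device the paper uses in its proof of Theorem~\ref{thm:main}, and it avoids appealing to the MacNeille-completion result of \cite{LS96}.

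The paper's Lemma~\ref{lem:capofasmi} goes further than your entrywise witness. It constructs a permutation in $X_A$ whose rank function agrees with $r_A$ along an entire column. That column-wise agreement is what the paper needs to conclude that the ASM $U$ with $X_U=\bigcup_t X_{A_t}$ lies in $\ASM^I(n)$, since the defining condition of $\ASM^I(n)$ is checked column by column (Lemma~\ref{lem:ASMI}). For the non-parabolic statement here, your entrywise witness is all that is required.
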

\section{MacNeille completion of $S_n^I$}\label{sec:proof}
In this section, we prove the main theorem, that $\ASM^I(n)$ is the MacNeille completion of $S_n^I$. We first show that $\ASM^I(n)$ is a lattice, and then provide the exact formulas of the join and the meet operations in $\ASM^I(n)$. 

From now on, fix a subset $I\subset[n-1]$, and write it as a disjoint union of connected components in the type $A_{n-1}$ Dynkin diagram as $I=\cup_{1\leq t\leq k}\{u_t,u_t+1,\cdots,v_t\}$, where $u_t\leq v_t\in[n-1]$ and $v_t+1\leq u_{t+1}-1$ for $t\in[k]$.
\subsection{Properties of $\ASM^I$}
\begin{proposition}\label{ASMIlattice}
$\ASM^I(n)$ is a lattice.
\end{proposition}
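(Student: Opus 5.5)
We will use Proposition~\ref{lattice}: $\ASM^I(n)$ is finite and has a minimum, namely the identity $\mathrm{id}$. Indeed $\mathrm{id}\in S_n^I$, so its rank matrix $r(i,j)=\min(i,j)$ satisfies the defining condition. Concretely, for $i\in I$ the triple $(r(i-1,j),r(i,j),r(i+1,j))$ equals $(j,j,j)$ when $j\le i-1$, and equals $(i-1,i,i+1)$ or $(i-1,i,i)$ when $j\ge i$; none of these has the forbidden shape $(k,k,k+1)$. Since $\mathrm{id}$ is the minimum of $\ASM(n)$, it is also the minimum of $\ASM^I(n)$. By Proposition~\ref{lattice} it therefore suffices to show that any two elements $A,B\in\ASM^I(n)$ have a join in $\ASM^I(n)$.

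The natural candidate is the join in $\ASM(n)$, whose rank matrix is the entrywise minimum $r_C=\min(r_A,r_B)$; this is always a corner sum matrix. If $C\in\ASM^I(n)$, then $C$ is automatically the least upper bound inside the subposet, because every upper bound of $A$ and $B$ in $\ASM^I(n)$ is an upper bound in $\ASM(n)$. So the whole proof reduces to one closure claim: $\ASM^I(n)$ is closed under entrywise minimum of rank matrices.

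For the closure claim, fix $i\in I$ and $j\in[n]$. Write $x_t=r_A(i+t,j)$ and $y_t=r_B(i+t,j)$ for $t\in\{-1,0,1\}$. Each column sequence increases by $0$ or $1$ at every step. In this language the condition on $A$ says: if $x_0=x_{-1}$, then $x_1=x_0$. Equivalently, a column cannot have a flat step followed by an up step across rows $i-1,i,i+1$. We must show the same holds for $z_t=\min(x_t,y_t)$.

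Suppose, for contradiction, that $z_{-1}=z_0=k$ and $z_1=k+1$. Since $z_1=k+1$, we have $x_1,y_1\ge k+1$, so $x_0,y_0\ge k$. Because $z_0=k$, one of them, say $x_0$, equals $k$. Then $x_1\ge k+1$ forces $x_1=k+1$, so the $A$-column takes an up step from row $i$ to row $i+1$. By the condition on $A$, its previous step must also be up, so $x_{-1}=k-1$. But then $z_{-1}\le k-1<k$, a contradiction. Hence $C\in\ASM^I(n)$.

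This argument is short, and the only real content is the min-closure step. The only point needing care is the boundary convention $r(0,j)=0$ (relevant when $i=1\in I$), which the same argument handles verbatim. It also confirms the remark in the paper that joins in $\ASM^I(n)$ agree with joins in $\ASM(n)$.
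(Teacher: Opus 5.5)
Your proposal is correct and is essentially the paper's proof: the identity is the minimum, the $\ASM(n)$-join $\min(r_A,r_B)$ stays in $\ASM^I(n)$ because the matrix attaining the minimum at row $i$ would be forced into a forbidden $(k,k,k+1)$ pattern, and Proposition~\ref{lattice} finishes. The differences are cosmetic. You reach the contradiction via $x_{-1}=k-1$ rather than exhibiting the forbidden triple in $A$ directly. You also explicitly check that the identity lies in $\ASM^I(n)$, which the paper leaves implicit.
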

\begin{proof}
The minimal element of $\ASM^I(n)$ is the identity matrix. 

For any $A,B\in\ASM^I(n)$, we will show that the join of $A$ and $B$ in $\ASM^I(n)$ exists. Let $C$ be the join of $A$ and $B$ in $\ASM(n)$. We claim that $C\in\ASM^I(n)$.

Arguing contradictorily that $C\notin\ASM^I(n)$, there exists $i\in I$ such that $r_C(i,j)=r_C(i-1,j)=r_C(i+1,j)-1$. Since $A\vee B = C$, we have $r_C(i,j)=\min\{r_A(i,j),r_B(i,j)\}$. Without loss of generality, assume $r_A(i,j)=r_C(i,j)$.  Since $r_C(i-1,j)=\min\{r_A(i-1,j),r_B(i-1,j)\}$, we have $r_A(i-1,j)\geq r_C(i-1,j)=r_C(i,j)=r_A(i,j)$. This means $r_A(i-1,j)=r_A(i,j)$. Similarly, we obtain $r_A(i,j)=r_A(i+1,j)-1$, contradicting the fact that $A\in\ASM^I(n)$.

Now that $C\in\ASM^I(n)$, it is the join of $A$ and $B$ in $\ASM^I(n)$. Using Proposition \ref{lattice}, we conclude that $\ASM^I(n)$ is a lattice.    
\end{proof}
To prove that $\ASM^I(n)$ is exactly the MacNeille completion of $S_n^I$, more properties need to be established. The following lemma is easy but useful for later calculations.
\begin{lemma}\label{lem:ineqt}
For $a,b,c,d\in\mathbb{R}$,
    \begin{align*}
        \min\{a-b,c-d\}\leq\min\{a,c\}&-\min\{b,d\}\leq\max\{a-b,c-d\},\\
        \min\{a-b,c-d\}\leq\max\{a,c\}&-\max\{b,d\}\leq\max\{a-b,c-d\}.
    \end{align*}
\end{lemma}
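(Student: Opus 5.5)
By the symmetry $(a,b,c,d)\mapsto(c,d,a,b)$, each of the four inequalities reduces to a case split on which of $a,c$ and which of $b,d$ attains the relevant minimum or maximum. For the first line, I will first assume $a\le c$, so that $\min\{a,c\}=a$, and then split on $\min\{b,d\}$.

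\textbf{First line, case $b\le d$.} Here $\min\{a,c\}-\min\{b,d\}=a-b$, which is one of the two numbers $a-b,c-d$. It therefore lies between their minimum and their maximum trivially.

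\textbf{First line, case $d<b$.} Here the quantity is $a-d$. Since $a\le c$, we get $a-d\le c-d\le\max\{a-b,c-d\}$. Since $d<b$, we get $a-d> a-b\ge\min\{a-b,c-d\}$. Both bounds hold.

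\textbf{Second line.} This follows by the same argument applied with the maxima. Alternatively, I will replace $(a,b,c,d)$ by $(-a,-b,-c,-d)$ in the first line. This turns $\min$ into $-\max$, so the middle term becomes $\max\{b,d\}-\max\{a,c\}$. The outer terms become $\min\{b-a,d-c\}$ and $\max\{b-a,d-c\}$. Negating the whole chain reverses the inequalities and yields exactly the second line.

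There is no real obstacle here; the only care needed is to organize the cases so that symmetry cuts the work down to the two cases above.
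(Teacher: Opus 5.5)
Your proof is correct and follows essentially the same elementary approach as the paper, a case analysis on the relative order of $a,c$ and of $b,d$. The difference is only in organization. The paper avoids subcases: it gets the lower bound of the first line and the upper bound of the second directly from $a\le c$ (using $\min\{b,d\}\le b$ and $\max\{b,d\}\ge d$), and the remaining two bounds from $b\le d$. You instead split the first line into two cases and deduce the second line via $(a,b,c,d)\mapsto(-a,-b,-c,-d)$, which is also valid.
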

\begin{proof}
First, assume without loss of generality that $a \leq c$ . Then,
    \begin{align*}
        \min\{a,c\}-\min\{b,d\}&\geq a-b
        \geq\min\{a-b,c-d\}, \\
        \max\{a,c\}-\max\{b,d\}&\leq c-d
        \leq\max\{a-b,c-d\}.
    \end{align*}

    The other two inequalities can be proved in the same way by presetting the order relation between $b$ and $d$.
\end{proof}
The following lemma gives a more explicit description for elements in $\ASM^I(n)$.
\begin{lemma}\label{lem:ASMI}
Let $A\in\ASM(n)$. Then $A\in\ASM^I(n)$ if and only if for all $i\in\{u_t,u_t+1,\cdots,v_t\}$, 
\begin{equation}\label{eqt:rA}
r_A(i,j)=\min\{r_A(v_t+1,j),r_A(u_t-1,j)+(i-u_t+1)\}.
\end{equation}
\end{lemma}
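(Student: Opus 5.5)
Fix a connected component $\{u_t,\dots,v_t\}$ of $I$ and a column $j$, and write $f(i)=r_A(i,j)$ for $u_t-1\le i\le v_t+1$. By Definition~\ref{def:CSM}, $f$ is weakly increasing with increments $f(i)-f(i-1)\in\{0,1\}$. The condition defining $\ASM^I(n)$, restricted to this component and column, says that for each $i\in\{u_t,\dots,v_t\}$ we have $f(i)=f(i-1)+1$ or $f(i)=f(i+1)$. Equivalently, the increment sequence $\delta_i=f(i)-f(i-1)$, for $i=u_t,\dots,v_t+1$, never contains the pattern $\delta_i=0,\ \delta_{i+1}=1$ with $i\in\{u_t,\dots,v_t\}$. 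Such a sequence must have the form $1,1,\dots,1,0,0,\dots,0$, i.e.\ all ones followed by all zeros. The plan is therefore to show that, for a $0/1$ increment sequence, ``no $01$ pattern'' is equivalent to equation \eqref{eqt:rA}. The columns and components are independent, so this suffices.

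For the forward direction, assume $A\in\ASM^I(n)$. Then the increments over positions $u_t,\dots,v_t+1$ are $1$ up to some index $m$ (with $u_t-1\le m\le v_t+1$) and $0$ afterwards. Hence $f(i)=f(u_t-1)+(i-u_t+1)$ for $i\le m$, and $f(i)=f(m)=f(v_t+1)$ for $i\ge m$. For any $i$ in the component, $f(i)\le f(u_t-1)+(i-u_t+1)$ because the increments are at most $1$, and $f(i)\le f(v_t+1)$ by monotonicity. One of these two bounds is attained according to whether $i\le m$ or $i\ge m$, so $f(i)$ equals the minimum, which is \eqref{eqt:rA}.

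For the converse, assume \eqref{eqt:rA} holds for all $i$ in the component. Then $f(i)=\min\{L(i),R\}$, where $L(i)=f(u_t-1)+(i-u_t+1)$ increases by exactly $1$ per step and $R=f(v_t+1)$ is constant. This formula also holds at the endpoints: at $i=u_t-1$ because $L=f(u_t-1)\le R$, and at $i=v_t+1$ because $L(v_t+1)\ge f(v_t+1)$ by the unit-increment bound. The function $\min\{L(i),R\}$ increases by $1$ until it reaches $R$ and is constant thereafter, so its increments have the form $1\cdots10\cdots0$. Consequently, for each $i$ in the component, either $\delta_i=1$, giving $f(i)=f(i-1)+1$, or $\delta_{i+1}=0$, giving $f(i)=f(i+1)$. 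This is exactly the membership condition for $\ASM^I(n)$.

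The only mildly delicate point is handling the boundary rows $u_t-1$ and $v_t+1$, in particular when $u_t=1$ (where $f(0)=0$ by convention) or $v_t+1=n$. These are covered by the conventions of Definition~\ref{def:CSM}, so no real obstacle is expected; the whole argument is the observation that a $0/1$ step sequence avoiding the pattern $01$ is of the form $1^a0^b$.
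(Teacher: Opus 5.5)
Your proof is correct and follows the paper's route: both directions rest on the fact that the $0/1$ increment sequence of $r_A(\cdot,j)$ over rows $u_t,\dots,v_t+1$ has the form $1\cdots10\cdots0$, and your forward direction is the same as the paper's. For the converse, you extend $f(i)=\min\{L(i),R\}$ to the endpoint rows and read off the shape of the minimum, whereas the paper argues locally that $\delta_i=0$ forces $f(i)=R$ and hence $\delta_{i+1}=0$; this is only a cosmetic difference, and your explicit check of the endpoint rows is if anything slightly more careful.
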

\begin{proof}
If $A\in\ASM^I(n)$, then for $i\in \{u_t,u_t+1,\cdots,v_t\}$, we have $r_A(i,j)=r_A(i-1,j)+1$ or $r_A(i+1,j)=r_A(i,j)$. Consider the $\{0,1\}$-sequence $r_A(i,j)-r_A(i-1,j)$ for $i=u_t,\ldots,v_t+1$. By the definition of $\ASM^I(n)$ (Theorem~\ref{thm:main}), $1$ cannot follow any $0$'s in this sequence. In other words, there exists some $r\in\{u_t-1,u_t,u_t+1,\cdots,v_t,v_t+1\}$ such that 
\[r_A(i,j)=\begin{cases}
    r_A(i-1,j)+1, & \text{ for }u_t\leq i\leq r, \\
    r_A(i+1,j), & \text{ for }r\leq i\leq v_t.
\end{cases}\]
A straightforward calculation shows that 
\[r_A(i,j)=\begin{cases}
    r_A(u_t-1,j)+(i-u_t+1), & \text{ for }u_t\leq i\leq r, \\
    r_A(v_t+1,j), & \text{ for }r\leq i\leq v_t,
\end{cases}\]which is the same as Equation~\eqref{eqt:rA}.
    
For the converse, assume that Equation~\eqref{eqt:rA} holds. We need to show that for all $i\in I$, $r_A(i,j)=r_A(i-1,j)+1$ or $r_A(i+1,j)=r_A(i,j)$. Suppose that for some $i\in\{u_t,u_t+1,\cdots,v_t\}$, $r_A(i,j)=r_A(i-1,j)$. We will show that $r_A(i+1,j)=r_A(i,j)$. We first claim that $r_A(i,j)=r_A(v_t+1,j)$, since otherwise,
\begin{align*}
r_A(i,j)=&r_A(u_t-1,j)+(i-u_t+1)\\
>&\min\{r_A(v_t+1,j),r_A(u_t-1,j)+(i-1-u_t+1)\}\geq r_A(i-1,j)
\end{align*}
contradicting $r_A(i,j)=r_A(i-1,j)$. Therefore,
\begin{align*}
r_A(i+1,j)=&\min\{r_A(v_t+1,j),r_A(u_t-1,j)+(i+1-1-u_t+1)\}\\
\leq& r_A(v_t+1,j)=r_A(i,j).
\end{align*}
We obtain $r_A(i+1,j)=r_A(i,j)$ as desired.
\end{proof}
\begin{definition}\label{def:leqI}
    For $A,B\in\ASM(n)$, we say $A\leq_IB$ if and only if \[r_A(i,j)\geq r_B(i,j)\text{ for all }i\notin I\text{ and }j\in[n].\]
    If $A\geq_IB$ and $A\leq_IB$, we will write $A=_IB$.
\end{definition}
Note that this order is not a partial order on $\ASM(n)$, However, the next lemma shows it is a partial order on $\ASM^I(n)$. This means that to compare elements in $\ASM^I(n)$, we only need to compare entries with column index not in $I$.
\begin{lemma}\label{lem:leqI}
For $A,B\in\ASM(n)$, the followings are true.
\begin{enumerate}
\item If $A\in\ASM^I(n)$ and $B\in\ASM(n)$, then $A\leq B$ if and only if $A\leq_IB$.
\item If $A,B\in\ASM^I(n)$, then $A\leq B$ if and only if $A\leq_IB$.
\item If $A,B\in\ASM^I(n)$, then $A=B$ if and only if $A =_I B$.
\end{enumerate}
\end{lemma}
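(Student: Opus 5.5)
Part (1) carries the content; parts (2) and (3) follow from it. Note that $A\leq B$ (comparing all entries) trivially implies $A\leq_IB$. So the real claim in (1) is the converse: if $A\in\ASM^I(n)$ and $r_A(i,j)\geq r_B(i,j)$ for every row index $i\notin I$, then the same inequality holds for $i\in I$. I will handle this one connected component $\{u_t,\ldots,v_t\}$ of $I$ at a time, using Lemma~\ref{lem:ASMI}. The rows $u_t-1$ and $v_t+1$ are not in $I$, or are the boundary rows $0$ or $n$, where all rank matrices agree. So the hypothesis gives $r_A(u_t-1,j)\geq r_B(u_t-1,j)$ and $r_A(v_t+1,j)\geq r_B(v_t+1,j)$.

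Fix $i\in\{u_t,\ldots,v_t\}$. By Lemma~\ref{lem:ASMI},
\[r_A(i,j)=\min\{r_A(v_t+1,j),\,r_A(u_t-1,j)+(i-u_t+1)\}.\]
For an arbitrary $B\in\ASM(n)$, the column-direction steps of $r_B$ lie in $\{0,1\}$ (Definition~\ref{def:CSM}). Hence $r_B(i,j)\leq r_B(v_t+1,j)$ by monotonicity, and $r_B(i,j)\leq r_B(u_t-1,j)+(i-u_t+1)$ because each step increases the rank by at most $1$. Therefore
\[r_B(i,j)\leq \min\{r_B(v_t+1,j),\,r_B(u_t-1,j)+(i-u_t+1)\}\leq \min\{r_A(v_t+1,j),\,r_A(u_t-1,j)+(i-u_t+1)\}=r_A(i,j).\]
This gives $A\leq B$ in the Bruhat order on $\ASM(n)$, which proves (1).

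Part (2) is the special case $B\in\ASM^I(n)\subset\ASM(n)$ of (1). For part (3), $A=_IB$ means $A\leq_IB$ and $B\leq_IA$. Applying (2) both ways gives $A\leq B$ and $B\leq A$, and antisymmetry of the Bruhat order on $\ASM(n)$, which is entrywise comparison of corner sum matrices, gives $A=B$. The converse is trivial.

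The only real point is the observation in (1) that the formula of Lemma~\ref{lem:ASMI} makes $r_A$ on rows of $I$ the largest value compatible with its values on the neighbouring rows outside $I$. After that the argument is routine; the only care needed is the boundary conventions $r(0,j)=0$ and $r(n,j)=j$ when $u_t=1$ or $v_t=n-1$.
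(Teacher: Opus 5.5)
Your proposal is correct and takes essentially the paper's approach. Both combine Lemma~\ref{lem:ASMI} with the bounds $r_B(i,j)\leq r_B(v_t+1,j)$ and $r_B(i,j)\leq r_B(u_t-1,j)+(i-u_t+1)$; the paper argues by contrapositive with a case split on which term attains the minimum, while you chain the inequalities directly, which avoids the case split and handles the boundary rows $0$ and $n$ explicitly.
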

\begin{proof}
(2) is a corollary of (1), and (3) is a corollary of (2). So we only need to prove (1).

If $A\leq B$, then $A\leq_I B$ by definition. If $A\nleq B$, assume $r_A(i,j)< r_B(i,j)$ for some $i,j\in[n]$. If $i\notin I$, then $A\nleq_I B$. If $i\in I$, suppose that $i\in\{u_t,u_t+1,\cdots,v_t\}$. Then by Lemma \ref{lem:ASMI}, \[r_A(i,j)=\min\{r_A(v_t+1,j),r_A(u_t-1,j)+(i-u_t+1)\}.\]
If $r_A(i,j)=r_A(v_t+1,j)$, then \[r_A(v_t+1,j)= r_A(i,j)< r_B(i,j)\leq r_B(v_t+1,j),\] where $v_t+1\notin I$, and thus $A\nleq_I B$. If $r_A(i,j)=r_A(u_t-1,j)+(i-u_t+1)$, then\[r_A(u_t-1,j)=r_A(i,j)-(i-u_t+1)< r_B(i,j)-(i-u_t+1) \leq r_B(u_t-1,j),\] where $u_t-1\notin I$, and thus $A\nleq_I B$.
\end{proof}
Now we explicitly describe the meet and join operations in $\ASM^I(n)$.
\begin{theorem}\label{thm:meet-operation}
For $A,B\in\ASM^I(n)$, the join of $A$ and $B$ in $\ASM^I(n)$ agrees with that in $\ASM(n)$, while the meet of $A$ and $B$ in $\ASM^I(n)$, denoted by $C$, is given by
\begin{equation}\label{eqn:meet-expression}
r_C(i,j)=\begin{cases}
    \max\{r_A(i,j),r_B(i,j)\}, & \text{ for }i\in [n]\setminus I, \\
    \min\{r_C(v_t+1,j),r_C(u_t-1,j)+(i{-}u_t{+}1))\}, & \text{ for }u_t\leq i\leq v_t.
\end{cases}
\end{equation}
\end{theorem}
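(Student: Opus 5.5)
The join statement is already contained in the proof of Proposition~\ref{ASMIlattice}: the join $A\vee B$ computed in $\ASM(n)$, i.e.\ the entrywise minimum of $r_A$ and $r_B$, lies in $\ASM^I(n)$. Since $\ASM^I(n)$ carries the induced order, this element is also the join in $\ASM^I(n)$. The real work is the meet. The plan is to show that the matrix $C$ defined by \eqref{eqn:meet-expression} is a corner sum matrix whose ASM lies in $\ASM^I(n)$, is a lower bound for $A$ and $B$, and lies above every common lower bound.

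\textbf{Step 1: $C$ is a corner sum matrix.} Let $M=\max\{r_A,r_B\}$ entrywise. This is the rank matrix of the meet in $\ASM(n)$, so it is a corner sum matrix. The rows of $C$ outside $I$ agree with those of $M$. We must check three things.
\begin{itemize}
\item \emph{Vertical increments within a block.} On each block $\{u_t,\dots,v_t\}$ the formula interpolates between rows $u_t-1$ and $v_t+1$ by $\min\{r_C(v_t+1,j),\,r_C(u_t-1,j)+(i-u_t+1)\}$. Because $0\le M(v_t+1,j)-M(u_t-1,j)\le v_t-u_t+2$, the vertical increments inside the block are in $\{0,1\}$, and row $v_t+1$ is matched at the end.
\item \emph{Horizontal increments within a block.} Each row of $C$ in the block is the minimum of two rows of $M$, namely row $v_t+1$ and row $u_t-1$ shifted by a constant. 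Both have horizontal increments in $\{0,1\}$. Lemma~\ref{lem:ineqt} then shows that the minimum also has horizontal increments in $\{0,1\}$.
\item \emph{Boundary conditions.} These are inherited from $M$, including $c_{n,n}=n$ (note $n\notin I$).
\end{itemize}
By construction and Lemma~\ref{lem:ASMI}, the corresponding ASM lies in $\ASM^I(n)$.

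\textbf{Step 2: lower bound and maximality.} Outside $I$ we have $r_C\ge r_A,r_B$, so $C\le_I A$ and $C\le_I B$. Lemma~\ref{lem:leqI}(1) then gives $C\le A$ and $C\le B$. Now let $D\in\ASM^I(n)$ satisfy $D\le A$ and $D\le B$. Then $r_D\ge M$ on rows outside $I$, so $r_D\ge r_C$ there, i.e.\ $D\le_I C$. Lemma~\ref{lem:leqI}(2) converts this to $D\le C$. Hence $C$ is the meet.

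The main obstacle is Step 1, specifically the horizontal condition $c_{i,j}-c_{i,j-1}\in\{0,1\}$ for rows $i\in I$. I would handle it by applying the first inequality of Lemma~\ref{lem:ineqt} with $a=M(v_t+1,j)$, $b=M(v_t+1,j-1)$, $c=M(u_t-1,j)+(i-u_t+1)$, $d=M(u_t-1,j-1)+(i-u_t+1)$. Both differences $a-b$ and $c-d$ lie in $\{0,1\}$, so the difference of the minima lies in $[0,1]$. Since all quantities are integers, it lies in $\{0,1\}$.

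The vertical condition inside a block reduces to the monotone interpolation observation in Step 1. Across the block boundary, the step from row $v_t$ to row $v_t+1$ uses the bound $M(v_t+1,j)\le M(u_t-1,j)+(v_t-u_t+2)$, which is exactly what keeps that last jump at most $1$.
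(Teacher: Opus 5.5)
Your proof is correct and follows the paper's argument essentially step for step. You build the candidate rank matrix from the formula and check that it is a corner sum matrix via Lemma~\ref{lem:ineqt}; your interpolation argument spells out the vertical increments and the block-boundary rows more explicitly than the paper does. You then invoke Lemma~\ref{lem:ASMI} for membership in $\ASM^I(n)$ and use Lemma~\ref{lem:leqI} to reduce both the lower-bound check and the maximality check to rows outside $I$.
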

\begin{proof}
It is proved in Proposition \ref{ASMIlattice} that the join of $A$ and $B$, as elements in the lattice $\ASM(n)$, is also in $\ASM^I(n)$. Since $\ASM^I(n)$ is a subposet of $\ASM(n)$, the joins of $A$ and $B$ in these two lattices coincide.

For the core of this theorem, let $M$ be a matrix $M$ with entries defined as in Equation~\eqref{eqn:meet-expression}. By a direct application of Lemma~\ref{lem:ineqt}, we see that $M_{i+1,j}-M_{i,j}\in\{0,1\}$ and $M_{i,j+1}-M_{i,j}\in\{0,1\}$. Moreover, $M_{n,n}=n$ from construction. This means that $M$ is a corner sum matrix (Definition~\ref{def:CSM}), which bijects to an alternating sign matrix $C\in\ASM(n)$ with $r_C=M$. By Lemma~\ref{lem:ASMI}, $C\in\ASM^I(n)$. It remains to show that $C=A\wedge B$ in $\ASM^I(n)$.

First, $r_C(i,j)=\max\{r_A(i,j),r_B(i,j)\}$ for $i\notin I$ and $j\in[n]$, we have $C\leq_I A,B$ by Definition~\ref{def:leqI}. By Lemma \ref{lem:leqI}(2), $C\leq A,B$. Next, take any $X\in\ASM^I(n)$ with $X\leq A,B$, we have $r_X(i,j)\geq\max\{r_A(i,j),r_B(i,j)\}=r_C(i,j)$ for $i\notin I$ and $j\in[n]$. So $X\leq_I C$. Again by Lemma \ref{lem:leqI}(2), $X\leq C$ as desired.
\end{proof}

\subsection{MacNeille completion of $S_n^I$}
Recall that for $w\in S_n$, we have the parabolic decomposition $w=w^I w_I$, where $w_I$ is in the subgroup of $S_n$ generated by $\{s_i\:|\:i\in I\}$, and $w^I$ is the minimal coset representative of $w$ with respect to this subgroup.
\begin{lemma}\label{pISn}
For $w \in S_n$, $r_{w^I}(i,j)=r_w(i,j)$ for $i\notin I$ and $j\in[n]$.
\end{lemma}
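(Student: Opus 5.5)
Since $w=w^Iw_I$, we have $w(k)=w^I(w_I(k))$ for every $k$. The element $w_I$ lies in the subgroup generated by $\{s_i\:|\:i\in I\}$, so it permutes each block of consecutive positions $\{u_t,u_t+1,\ldots,v_t+1\}$ among themselves and fixes every position outside these blocks. The plan is to show that for $i\notin I$ the set $[i]$ is stable under $w_I$, and then to compare the counts that define the rank matrices directly.

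\textbf{Step 1: $w_I([i])=[i]$ for $i\notin I$.} Write $w_I$ as a product of simple reflections $s_m$ with $m\in I$. For each such factor we have $m\neq i$, since $i\notin I$. A simple reflection $s_m$ with $m\neq i$ maps $[i]$ to itself: if $m<i$ it swaps two elements of $[i]$, and if $m>i$ it swaps two elements outside $[i]$. Hence every factor preserves $[i]$, and so $w_I([i])=[i]$. Equivalently, in terms of blocks: $i\notin I$ means $i$ is never strictly inside a block, i.e. $i$ is never a position $p$ with $u_t\le p\le v_t$. So each block $\{u_t,\ldots,v_t+1\}$ lies entirely in $[i]$ or entirely outside it.

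\textbf{Step 2: compare the counts.} By definition,
\[
r_w(i,j)=|\{k\le i\:|\:w^I(w_I(k))\le j\}|.
\]
Substitute $k'=w_I(k)$. Since $w_I$ is a bijection of $[i]$ onto itself by Step 1, this set has the same size as $\{k'\le i\:|\:w^I(k')\le j\}$, which is exactly $r_{w^I}(i,j)$. This holds for every $j\in[n]$, which proves the lemma.

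There is no real obstacle. The only point needing care is Step 1: that a simple reflection $s_m$ with $m\ne i$ preserves $[i]$. Everything else is bookkeeping. Length-additivity of the parabolic decomposition is not needed; only $w_I\in S_n(I)$ is used.
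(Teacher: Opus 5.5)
Your proposal is correct and follows essentially the same approach as the paper: elements of $S_n(I)$ stabilize $[i]$ for $i\notin I$, so the count defining the rank matrix can be re-indexed. The paper re-indexes one generator $s_t$ at a time by induction on length, whereas you first show $w_I([i])=[i]$ and then re-index once.
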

\begin{proof}
We will in fact show that $r_{wv}(i,j)=r_{w}(i,j)$ for $i\notin I$, $j\in [n]$ and $v\in S_n(I)$. By induction on $\ell(v)$, we only need to prove it for $v=s_t$ where $t\in I$. 

For $i\notin I$, $s_t(k)\leq i$ if and only if $k\leq i$. For $i\notin I$ and $j\in[n]$,
\begin{align*}
r_{ws_t}(i,j)=&|\{k\:|\: k\leq i,\ ws_t(k)\leq j\}|=|\{k\:|\: s_t(k)\leq i,\ ws_t(k)\leq j\}|\\
=&|\{k\:|\: k\leq i,\ w(k)\leq j\}|=r_w(i,j).
\end{align*}
\end{proof}
For the main proof, we will need the following slightly stronger version of Proposition~\ref{prop:MacNeille0}.
\begin{proposition}\label{prop:MacNeille}
Let $P$ be a poset embedded in a lattice $L$. If for each element $x \in L$, there exist elements $y_1,y_2,\cdots,y_k$ and $z_1,z_2,\cdots,z_m$ in $P$ such that $x = \vee_{i=1}^ky_i=\wedge_{i=1}^mz_i$, then $L$ is the MacNeille completion of $P$.
\end{proposition}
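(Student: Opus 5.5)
We reduce Proposition~\ref{prop:MacNeille} to Proposition~\ref{prop:MacNeille0}: it suffices to show that $P$ is join-dense and meet-dense in $L$. We treat join-density; meet-density follows by the dual argument, interchanging $\vee$ with $\wedge$ and $\leq$ with $\geq$.

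Fix $x\in L$ with $x=\bigvee_{i=1}^k y_i$ and $y_i\in P$. Set $S=\{y\in P: y\leq x\}$. Every element of $S$ lies below $x$, so $x$ is an upper bound of $S$ in $L$. Hence $\bigvee S$, which exists because $L$ is a lattice and everything here is finite (or complete, as in the setting of a MacNeille completion), satisfies $\bigvee S\leq x$.

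For the reverse inequality, note that each $y_i\leq x$, so $y_i\in S$ and therefore $y_i\leq \bigvee S$ for every $i$. Consequently $x=\bigvee_i y_i\leq \bigvee S$, and antisymmetry gives $x=\bigvee_{y\in P,\,y\leq x} y$. This is exactly join-density.

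Dually, from $x=\bigwedge_{i=1}^m z_i$ we get $x=\bigwedge_{z\in P,\,z\geq x} z$, which is meet-density. Proposition~\ref{prop:MacNeille0} then shows that $L$ is the MacNeille completion of $P$.

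The only step needing care is the existence of the possibly infinite join $\bigvee S$. In our application $L$ is finite, so $S$ is a finite subset and the join exists in the lattice. If the statement is intended for general lattices, one adds a completeness hypothesis or notes that the argument only uses the existence of $\bigvee S$.
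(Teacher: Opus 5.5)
Your proof is correct and follows the paper's argument: each $y_i$ lies in $S=\{y\in P: y\leq x\}$, so $x=\bigvee_i y_i\leq\bigvee S\leq x$, the dual argument gives meet-density, and Proposition~\ref{prop:MacNeille0} finishes. Your caveat about whether $\bigvee S$ exists is unnecessary, because any upper bound of $S$ dominates every $y_i$ and hence $x$, so $x$ is itself the least upper bound of $S$; finiteness or completeness of $L$ is really needed only so that $L$ can be a MacNeille completion at all, i.e.\ so that Proposition~\ref{prop:MacNeille0} applies.
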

\begin{proof}  
Since $x=\vee_{i=1}^k y_i$, we have $y_i \in \{y \in P\mid y\leq x\}$ for $i\in[k]$. Hence, $x=\vee_{i=1}^ky_i\leq\vee_{y\in P,y\leq x}y\leq x$ and thus $x=\vee_{y\in P,y\leq x}y$. The other equality $x=\wedge_{y\in P,z\geq x} z$ is analogous. This means that $P$ is both meet-dense and join-dense in $L$, and thus $L$ is the MacNeille completion of $P$ by Proposition~\ref{prop:MacNeille0}.
\end{proof}
We are now ready to prove the main theorem.
\begin{proof}[Proof of Theorem~\ref{thm:main}]
For convenience of notations, we write $r_A(i,j)$ as $a_{i,j}$ in this proof. Our strategy is to show that for any $A\in\ASM^I(n)$, \[A=\sideset{}{_{I}}\bigwedge_{i,j\in[n]}Q[i,j,a_{i,j}]^I\]
where $P[a,b,c]$, $Q[a,b,c]$ are the permutations defined in Section~\ref{sub:Bruhat}. Let $B\in\ASM^I(n)$ be the meet of these elements on the right hand side above. By Lemma~\ref{lem:leqI}(3), we only need to show that $A=_IB$. For $i\notin I$, using Proposition \ref{prop:joinirr-meetirr}(1), Lemma~\ref{pISn} and Theorem \ref{thm:meet-operation},\[a_{i,j}=r_{Q[i,j,a_{i,j}]}(i,j)=r_{Q[i,j,a_{i,j}]^I}(i,j)\leq\max_{u,v\in[n]}\{r_{Q[u,v,a_{u,v}]^I}(i,j)\}=r_B(i,j).\]
On the other hand, for $i\notin I$, using Proposition \ref{prop:joinirr-meetirr}(2), Lemma \ref{pISn} and Theorem \ref{thm:meet-operation}, \[a_{i,j}\geq\max_{u,v\in[n]}\{r_{Q[u,v,a_{u,v}]}(i,j)\}=\max_{u,v\in[n]}\{r_{Q[u,v,a_{u,v}]^I}(i,j)\}=r_B(i,j).\]
Therefore, $r_A(i,j)=r_B(i,j)$ for $i\notin I$ and $j\in [n]$, and thus $A=_IB$. In the same way, we can show that \[A =\bigvee_{i,j\in[n]} P[i,j,a_{i,j}]^I.\]
We conclude that $\ASM^I(n)$ is the MacNeille completion of $S_n^I$ by Proposition~\ref{prop:MacNeille}.
\end{proof}

\subsection{Relations to ASM varieties}
We work towards Proposition~\ref{prop:variety}. We first show that to determine $X_A$ for $A\in\ASM^I(n)$, one only needs those entries with row indices not in $I$.
\begin{lemma}\label{lem:asmvar}
Let $A\in\ASM^I(n)$, then\[X_A=\{Z\in\mathrm{Mat}_{n\times n}\mid\mathrm{rk}(Z_{[i],[j]})\leq r_A(i,j)\text{ for }i\notin I, j\in[n]\}.\]
\end{lemma}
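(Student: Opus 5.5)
One inclusion is immediate. The set on the right, call it $Y_A$, imposes only a subset of the rank conditions defining $X_A$, so $X_A\subseteq Y_A$. For the reverse inclusion, I will fix $Z\in Y_A$ and show that $\rk(Z_{[i],[j]})\leq r_A(i,j)$ also holds for rows $i\in I$. Write $i\in\{u_t,\ldots,v_t\}$ for the connected component of $I$ containing $i$. By Lemma~\ref{lem:ASMI},
\[r_A(i,j)=\min\{r_A(v_t+1,j),\,r_A(u_t-1,j)+(i-u_t+1)\},\]
and both $u_t-1$ and $v_t+1$ lie outside $I$. (We use the conventions $r_A(0,j)=0$ and $u_t-1\geq 0$; also $v_t+1\leq n$, and $n\notin I$ since $I\subset[n-1]$.) It therefore suffices to bound $\rk(Z_{[i],[j]})$ by each of the two terms in the minimum.

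The first bound comes from monotonicity of rank under taking submatrices. Since $Z_{[i],[j]}$ is a submatrix of $Z_{[v_t+1],[j]}$, we get
\[\rk(Z_{[i],[j]})\leq\rk(Z_{[v_t+1],[j]})\leq r_A(v_t+1,j),\]
where the last inequality holds because $v_t+1\notin I$ and $Z\in Y_A$.

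The second bound comes from the fact that appending a row raises rank by at most one. The matrix $Z_{[i],[j]}$ is obtained from $Z_{[u_t-1],[j]}$ by appending the $i-u_t+1$ rows $u_t,\ldots,i$. Hence
\[\rk(Z_{[i],[j]})\leq\rk(Z_{[u_t-1],[j]})+(i-u_t+1)\leq r_A(u_t-1,j)+(i-u_t+1),\]
again using $u_t-1\notin I$. When $u_t=1$, the first factor is the empty matrix, its rank is $0=r_A(0,j)$, and the bound still holds.

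Taking the minimum of the two bounds gives $\rk(Z_{[i],[j]})\leq r_A(i,j)$, so $Z\in X_A$ and $Y_A\subseteq X_A$. I expect no real obstacle here. The only point needing care is the boundary convention for rows $0$ and $n$, and this is handled because $I\subset[n-1]$.
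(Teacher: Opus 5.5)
Your proposal is correct and follows essentially the same argument as the paper: you take the trivial inclusion, then bound $\rk(Z_{[i],[j]})$ by both terms of the minimum from Lemma~\ref{lem:ASMI}, using rank monotonicity for one term and the fact that appending $i-u_t+1$ rows raises rank by at most $i-u_t+1$ for the other. Your explicit handling of the boundary case $u_t=1$ (empty matrix, $r_A(0,j)=0$) is a small piece of extra care that the paper leaves implicit.
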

\begin{proof}
Denote $X_A^I=\{Z\in\mathrm{Mat}_{n\times n}\mid\mathrm{rk}(Z_{[i],[j]})\leq r_A(i,j)\text{ for }i\notin I, j\in[n]\}$. By definition, $X_A^I\supset X_A$. So it suffices to show $X_A^I\subset X_A$. Let $M\in X_A^I$. We need to show that $\rk(M_{[i],[j]})\leq r_A(i,j)$ for $i\in I$ and $j\in[n]$. Suppose $i$ is in the range $\{u_t,u_t+1,\cdots,v_t\}$. For $0\leq i_1\leq i_2\leq n$, $\rk(M_{[i_1],[j]})\leq\rk(M_{[i_2],[j]})$ and $\rk(M_{[i_2],[j]})-\rk(M_{[i_1],[j]})\leq \rk(M_{[i_2]\backslash[i_1],[j]})\leq i_2-i_1$. Therefore, \[\rk(M_{[i],[j]})\leq\min\{\rk(M_{[v_t+1],[j]}),\rk(M_{[u_t-1],[j]})+(i-u_t+1)\}.\]

Note that $u_t-1,v_t+1\notin I$. By the definition of $X_A^I$ and Lemma~\ref{lem:ASMI}, \[\rk(M_{[i],[j]})\leq\min\{r_A(v_t+1,j),r_A(u_t-1,j)+(i-u_t+1)\}=r_A(i,j).\]So $M\in X_A$ and $X_A^I\subset X_A$ as desired.
\end{proof}
To prove Proposition~\ref{prop:variety}, another ingredient is the following lemma.
\begin{lemma}\label{lem:capofasmi}
Let $A_1,\cdots,A_k,A\in\ASM(n)$, and $X_{A_1}\cup\cdots\cup X_{A_k}=X_{A}$.
\begin{enumerate}
    \item For each $j\in[n]$, there exists $t\in[k]$, such that $r_A(i,j)=r_{A_t}(i,j)$ for all $i\in[n]$.
    \item If $A_1,\cdots,A_k\in\ASM^I(n)$, then $A\in\ASM^I(n)$.
\end{enumerate}
\end{lemma}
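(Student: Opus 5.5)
For part (1), I would fix a column index $j\in[n]$ and test the union against one well-chosen matrix, namely a generic matrix $Z$ whose first $j$ columns realize the rank function of $A$ in column $j$ and whose remaining columns are zero. Concretely, let $c_i:=r_A(i,j)$, a weakly increasing sequence in $i$ with increments in $\{0,1\}$. Let $e_1,\dots,e_n$ be the standard basis of $\mathbb{C}^n$ (or of $\mathbb{C}^j$; $c_n=r_A(n,j)=j$). Build $Z$ so that row $i$ is a new basis vector $e_{c_i}$ in the first $j$ columns when $c_i=c_{i-1}+1$, and a generic combination of the earlier vectors when $c_i=c_{i-1}$. The columns beyond $j$ are zero.

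Then $\rk(Z_{[i],[j]})=c_i$ for all $i$. For an arbitrary $j'$, the rank $\rk(Z_{[i],[j']})$ is the rank of the projection of the row span onto the first $\min(j,j')$ coordinates. I would then check that $Z\in X_A$. For $j'\ge j$ this holds because $\rk(Z_{[i],[j']})=c_i\le r_A(i,j')$. For $j'<j$ it is the main obstacle: I need the projection rank to be at most $r_A(i,j')$. Rather than insist on full genericity, I would take $Z$ to be the rank-realizing matrix of the partial permutation/ASM structure. The cleaner choice is to take $Z$ to be the permutation-type matrix of $P[\cdot]$-style data: the matrix of the join-irreducible-free "column $j$ truncation". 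Namely, let $Z$ be the matrix whose $(i,q)$ entry is $1$ exactly when $c_i=c_{i-1}+1$ and $q$ equals the column in which the partial sums of column-truncated $A$ first reach $c_i$. In other words, use the monotone-triangle-style greedy placement in the $j\times n$ block with ranks bounded by $\min(r_A(i,j'),c_i)$. If this is awkward, the fallback is Theorem~\ref{thm:asmvar}/\cite{EKW25}, which treats the same statement for general ASM unions, and I would cite their argument directly.

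Given such a $Z\in X_A$, it lies in some $X_{A_t}$, so $r_{A_t}(i,j)\ge\rk(Z_{[i],[j]})=r_A(i,j)$ for all $i$. Conversely, $X_{A_t}\subseteq X_A$, and comparing generic points (the permutation matrices of the Bruhat-minimal elements, or simply $A_t$ itself) gives $r_{A_t}\le r_A$ entrywise. Hence $r_A(i,j)=r_{A_t}(i,j)$ for all $i$.

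For part (2), the defining condition of $\ASM^I(n)$ is columnwise: for fixed $j$, the triple $(r(i-1,j),r(i,j),r(i+1,j))$ is never $(k,k,k+1)$ for $i\in I$. By part (1), column $j$ of $r_A$ equals column $j$ of some $r_{A_t}$ with $A_t\in\ASM^I(n)$. So the condition holds for every $j$, and $A\in\ASM^I(n)$.
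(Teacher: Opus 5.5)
\textbf{Verdict: genuine gap.} Your overall strategy is the paper's: fix a column $j$, exhibit a point of $X_A$ whose ranks in column $j$ are exactly $r_A(\cdot,j)$, note it lies in some $X_{A_t}$, and squeeze. Part (2) is fine. However, the entire content of part (1) is the existence of that witness, and you never produce one.

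\textbf{Why each of your three options fails.}
\begin{enumerate}
\item Your first construction assigns $e_1,e_2,\dots$ in increasing order to the rows where $c_i$ jumps. This gives $\rk(Z_{[i],[j']})=\min(c_i,j')$, which can exceed $r_A(i,j')$. For example, take $n=2$, $A=\begin{pmatrix}0&1\\1&0\end{pmatrix}$ and $j=2$. You get $Z=\mathbf{I}_2$, but $r_A(1,1)=0$, so $Z\notin X_A$.
\item The replacement ``greedy placement'' is not specified precisely enough to check. Under its most natural reading, the $1$ in row $i$ goes in the first column $q$ with $r_A(i,q)=c_i$. In the same example this puts both $1$'s in column $2$, so $\rk(Z_{[2],[2]})=1\neq c_2=2$.
\item The fallback to Theorem~\ref{thm:asmvar} does not close the gap. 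It only yields $A=A_1\wedge\cdots\wedge A_k$, i.e.\ $r_A=\max_t r_{A_t}$ entrywise. That does not give a single $t$ agreeing with $A$ along an entire column, which is exactly what the witness is needed for.
\end{enumerate}

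\textbf{The missing idea: place the basis vectors in reverse order.} Let $i_1<\cdots<i_j$ be the rows where $c$ jumps; there are exactly $j$ of them since $c_n=j$. Set $Z_{i_l,\,j+1-l}=1$ and make every other entry $0$. This is what the paper's permutation $w$ does in its first $b$ columns; its extra entries in columns $>b$ are not needed.
\begin{itemize}
\item For $j'\le j$, $\rk(Z_{[i],[j']})=\max\{0,\,c_i-(j-j')\}$. This is at most $r_A(i,j')$ because column steps of a corner sum matrix are in $\{0,1\}$ and entries are nonnegative.
\item For $j'\ge j$, $\rk(Z_{[i],[j']})=c_i\le r_A(i,j')$ by monotonicity.
\end{itemize}
So $Z\in X_A$ and realizes column $j$ exactly.

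\textbf{A smaller point.} For the reverse inequality $r_{A_t}\le r_A$, the point ``$A_t$ itself'' does not work. Submatrix ranks of an ASM need not equal its corner sums: for $\begin{pmatrix}0&1&0\\1&-1&1\\0&1&0\end{pmatrix}$, the top-left $2\times2$ block has rank $2$ but the corner sum there is $1$.

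Use instead the permutation matrix of $Q=Q[i,j,r_{A_t}(i,j)]$. By Proposition~\ref{prop:joinirr-meetirr}, $A_t\le Q$, so $\dot Q\in X_{A_t}\subseteq X_A$. The same proposition gives $\rk(\dot Q_{[i],[j]})=r_{A_t}(i,j)$. Hence $r_{A_t}(i,j)\le r_A(i,j)$.
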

\begin{proof}[Proof of Theorem~\ref{prop:variety}]
(2) is a corollary of (1) combined with Lemma \ref{lem:ASMI}. So we only need to prove (1).

Assume for the sake of contradiction that there exists $b\in[n]$, such that for all $t\in[k]$, there exists $i\in[n]$ satisfying $r_A(i,b)\neq r_{A_t}(i,b)$. Now we fix such $b$ and construct a permutation $w\in S_n$, such that $w\in X_A$ and $r_w(i,b)=r_A(i,b)$ for all $i\in[n]$.

Since $\partial_A(i,b):=r_A(i,b)-r_A(i-1,b)\in\{0,1\}$ for all $i\in[n]$, and $b=r_A(n,b)=\sum_{i=1}^n\partial_A(i,b)$, the number of $\partial_A(i,b)$ that equals to $1$ is $b$. Suppose $\partial_A(i,b)=1$ for $i\in U:=\{u_1>u_2>\cdots>u_b\}$ and $\partial_A(i,b)=0$ for $i\in V:=\{v_{b+1}>v_{b+2}>\cdots>v_n\}$. Define $w\in S_n$ by $w(u_i)=i$ for $1\leq i\leq b$ and $w(v_i)=i$ for $b<i\leq n$. We check that
\begin{align*}
r_w(u_t,b)&=|\{i\:|\: i\leq u_t,w(i)\leq b\}|=|\{l\:|\:u_l\leq u_t\}|\\
&=\sum_{u_l\leq u_t}\partial_A(u_l,b)=\sum_{0\leq i\leq u_t}\partial_A(i,b)=r_A(u_t,b),\quad\text{and that}\\
r_w(v_t,b)&=v_t-|\{i\:|\:i\leq v_t,w(i)>b\}|=v_t-|\{l\:|\:v_l\leq v_t\}|=v_t-\sum_{v_l\leq v_t}(1-\partial(v_l,b))\\
&=\sum_{1\leq i\leq v_t}(1-\partial_A(i,b))=\sum_{1\leq i\leq v_t}\partial_A(i,b)=r_A(v_t,b).
\end{align*}
So $r_w(i,b)=r_A(i,b)$ for all $i\in[n]$. Then we show that $w\in X_A$. We claim that\[r_w(i,j)=\begin{cases}
\max\{0,r_w(i,b)-b+j\},&\text{ for }j\leq b,\\
\max\{r_w(i,b),i+j-n\},&\text{ for }j\geq b.
\end{cases}\]

If $j\leq b$, we may assume $i\in U$, because for $i\in V$, we can choose the maximal $u_t<i$ and then $r_w(i,j)=r_w(u_t,j)$ for all $j\leq b$. If there is no such $u_t<i$, we can choose the smallest $u_t>i$ and argue in the same way. Now set $i=u_t$ and $j\leq b$. We have
\begin{align*}
r_w(u_t,j)&=|\{i\:|\:i\leq u_t,w(i)\leq j\}|=|\{l\:|\:t\leq l\leq b,w(u_l)\leq j\}|\\
&=|\{l\:|\:t\leq l\leq j\}|=\max\{j-t+1,0\}.  
\end{align*}It satisfies $r_w(u_t,j)=\max\{0,r_w(u_t,b)-b+j\}$.

If $j\geq b$, we may assume $i\in V$, because for $i\in U$, we can choose the maximal $v_t<i$ and then $r_w(i,j)=r_w(v_t,j)-v_t+j$ for all $j\geq b$. If there is no such $v_t<i$, we can choose the smallest $v_t>i$ and argue in the same way. Now set $i=v_t$ and $j\geq b$. We have
\begin{align*}
r_w(v_t,j)&=v_t-|\{i\:|\:i\leq v_t,w(i)>j\}|=v_t-|\{l\:|\:t\leq l\leq n,w(v_l)>j\}|\\
&=v_t-|\{l\:|\:\max\{t,j+1\}\leq l\leq n\}|=v_t-n+\max\{t-1,j\}.
\end{align*}
It satisfies $r_w(v_t,j)=\max\{r_w(v_t,b),v_t+j-n\}$.

Finally, since $r_A$ is a corner sum matrix, 
\[r_A(i,j)\geq\begin{cases}
\max\{0,r_A(i,b)-b+j\},&\text{ for }j\leq b,\\
\max\{r_A(i,b),i+j-n\},&\text{ for }j\geq b.
\end{cases} \]
So if $j\leq b$,\[r_w(i,j)=\max\{0,r_w(i,b)-b+j\}=\max\{0,r_A(i,b)-b+j\}\leq r_A(i,j),\]and if $j\geq b$,\[r_w(i,j)=\max\{r_w(i,b),i+j-n\}=\max\{r_A(i,b),i+j-n\}\leq r_A(i,j).\]
Therefore, $r_w(i,j)\leq r_A(i,j)$ for all $i,j\in[n]$ meaning that $w\in X_A$.

By the assumption of contradiction, for each $t\in[k]$, there exists $i\in[n]$ such that $r_A(i,b)\neq r_{A_t}(i,b)$. Since $X_A\supset X_{A_t}$, we must have $r_A(i,b)>r_{A_t}(i,b)$. Combined with the fact that $r_A(i,b)=r_w(i,b)$, we get $w\notin X_{A_t}$. So $w\in X\backslash\cup_{t=1}^kX_{A_t}$, contradicting the fact that $X_{A_1}\cup\cdots\cup X_{A_k}=X_{A}$.
\end{proof}
We are now ready to prove Proposition~\ref{prop:variety}.
\begin{proof}[Proof of Proposition~\ref{prop:variety}]
(1) follows from Theorem~\ref{thm:asmvar} and Theorem~\ref{thm:meet-operation}. For (2), we prove the statement about the varieties, and then the statement about the ideals follows.

By Lemma~\ref{lem:asmvar}, $A\leq_I B$ if and only if $X_A\supset X_B$, where $A,B\in\ASM^I(n)$. Since $A=A_1\wedge^I A_2\wedge^I\cdots\wedge^I A_k$, we have $A\leq_I A_t$ for all $t\in[k]$. Thus, $X_A\supset X_{A_t}$, and then $X_A\supset\cup_{t=1}^k X_{A_t}$. Since $X_{A_1}\cup\cdots\cup X_{A_k}$ is an ASM variety, let it be $X_{U}$. By Lemma~\ref{lem:capofasmi}, $U\in\ASM^I(n)$. For all $t\in[k]$, $X_{A_t}\subset X_U$ implies $A_t\geq U$. Since $A$ is the meet of $A_t$, we have $A\geq U$. Therefore, $X_A\subset X_U$ and $U=A$. 
\end{proof}
\section{Special cases and further discussions}\label{sec:further}
\subsection{$|I|=n-2$ is maximal}
When $|I| = n-2$, $S_n^I$ is the order ideal of Young's lattice under a rectangle, and is thus already a lattice. As a sanity check to our main theorem, we use Theorem~\ref{thm:main} to explain that $\ASM^I(n)=S_n^I$ in this case.
\begin{lemma}\label{lem:In-2}
$s,t\in[n-1]$. Suppose that $I_1=\{1,2,\cdots,s\}\subset I$ and $I_2=\{t,t+1,\cdots,n-1\} \subset I$. Let $A\in\ASM^I(n)$. Then $A_{i,j}\neq-1$ if $i\in I_1\cup I_2\cup\{s+1,n\}$.
\end{lemma}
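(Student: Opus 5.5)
Since $I_1=\{1,\dots,s\}$ and $I_2=\{t,\dots,n-1\}$ are contained in $I$, I will apply Lemma~\ref{lem:ASMI} to the connected components containing them. The component containing $I_1$ has the form $\{1,\dots,v\}$ with $v\ge s$, and $r_A(0,j)=0$. The component containing $I_2$ has the form $\{u,\dots,n-1\}$ with $u\le t$, and $r_A(n,j)=j$. This gives explicit formulas for the rank matrix:
\[
r_A(i,j)=\min\{r_A(v+1,j),\,i\}\quad (i\le v),\qquad r_A(i,j)=\min\{j,\,r_A(u-1,j)+(i-u+1)\}\quad (u\le i\le n-1).
\]

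The entries of $A$ are recovered as $A_{i,j}=r_A(i,j)-r_A(i-1,j)-r_A(i,j-1)+r_A(i-1,j-1)$. Write $d_i(j):=r_A(i,j)-r_A(i-1,j)\in\{0,1\}$. Then $A_{i,j}=d_i(j)-d_i(j-1)$, so $A_{i,j}=-1$ exactly when $d_i(j-1)=1$ and $d_i(j)=0$. It therefore suffices to show that, for each row $i$ in the stated set, $j\mapsto d_i(j)$ is weakly increasing in $j$.

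For $1\le i\le s+1$ (note $s\le v$, so $i\le v+1$), set $f(j):=r_A(v+1,j)$ and $g(j):=r_A(i,j)$. Then $d_i(j)=\min\{f(j),i\}-\min\{f(j),i-1\}$, which equals $1$ iff $f(j)\ge i$. For $i\le v$ this holds because both $r_A(i,j)$ and $r_A(i-1,j)$ are given by the formula. For $i=v+1$ the same expression is valid, since $r_A(v+1,j)=\min\{f(j),v+1\}$ automatically as $f(j)\le v+1$. Because $f$ is weakly increasing in $j$, $[f(j)\ge i]$ is weakly increasing, so no $-1$ occurs in these rows. This covers $I_1\cup\{s+1\}$.

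For rows $t\le i\le n$ I argue symmetrically with $h(j):=r_A(u-1,j)$. For $u\le i\le n$ the formula $r_A(i,j)=\min\{j,h(j)+i-u+1\}$ holds, including $i=n$ because $h(j)+n-u+1\ge j$ (since $j-h(j)\le n-u+1$). Also $r_A(i-1,j)=\min\{j,h(j)+i-u\}$ for $i-1\ge u-1$. Hence $d_i(j)=1$ iff $j-h(j)\ge i-u+1$. The key point is that $j-h(j)$ is weakly increasing in $j$, because $h(j+1)-h(j)\in\{0,1\}$. Thus $d_i$ is weakly increasing, giving no $-1$ in rows $I_2\cup\{n\}$.

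The main obstacle is the boundary rows $s+1$ and $n$, which lie outside $I$ itself. For these I must check that the min-formula still holds, as done above; the remaining steps are routine.
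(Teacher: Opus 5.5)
Your proof is correct, and it takes a different route from the paper's.

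\textbf{The paper's argument.} It is combinatorial and uses only the defining condition of $\ASM^I(n)$. Let $j_i$ be the column of the leftmost $1$ in row $i$. The forbidden pattern $(k,k,k{+}1)$ forces $j_i\le j_{i+1}$ for $i\in I_1$. Then take the topmost $-1$ in the rows $I_1\cup\{s+1\}$. Its column has a $1$ in some higher row $x$, and row $x$ also has its leftmost $1$ strictly further left. So row $x$ contains a $-1$ between these two $1$'s, contradicting minimality. The rows $I_2\cup\{n\}$ are handled by the mirror argument with rightmost $1$'s.

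\textbf{Your argument.} You substitute the closed form of Lemma~\ref{lem:ASMI} for the components $\{1,\dots,v\}$ and $\{u,\dots,n-1\}$, with boundary values $r_A(0,j)=0$ and $r_A(n,j)=j$, into $A_{i,j}=d_i(j)-d_i(j-1)$. The claim then reduces to the monotonicity in $j$ of $[r_A(v+1,j)\ge i]$ and of $[j-r_A(u-1,j)\ge i-u+1]$. Your checks that the formulas extend to the boundary rows $v+1$ and $n$ (and trivially to rows $0$ and $u-1$) are right.

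\textbf{What each route buys.} Your route is direct rather than by contradiction, and it treats both ends by parallel computations instead of an appeal to symmetry. It also yields more information. Each row $i\le v+1$ (resp.\ $i\ge u$) contains a single $1$, at the least $j$ with $r_A(v+1,j)\ge i$ (resp.\ $j-r_A(u-1,j)\ge i-u+1$). Since these functions increase by at most one per step, the positions of these $1$'s are strictly increasing in $i$. That is exactly the fact invoked later in the proof of Proposition~\ref{prop:J=1...n-t}. The paper's argument, in turn, needs only the local three-row condition and sign alternation, and does not use Lemma~\ref{lem:ASMI}.
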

\begin{proof}
Define $I_1^+=I_1\cup\{s+1\},I_2^+=I_2\cup\{n\}$. For each $i\in I_1^+$, we define $j_i$ to be the smallest integer such that $A_{i,j_i}=1$. We claim that $j_i\leq j_{i+1}$ for all $i\in I_1$. Otherwise, let $i\in I_1$ be such that $j_i>j_{i+1}$. Then $r_A(i+1,j_{i+1})=r_A(i,j_{i+1})+1$ but $r_A(i,j_{i+1})=r_A(i-1,j_{i+1})$, contradicting the definition of $\ASM^I(n)$.

Now assume for the sake of contradiction that $A_{i,j}=-1$ for some $i\in I_1^+$. Choose $A_{u,v} = -1$ with the smallest row index $u$. Then there exists $x<u$, such that $A_{x,v} =1$. But we also have $A_{x,j_x} = 1$ where $j_x\leq j_u <v$, leading to the fact that $A_{x,y}=-1$ for some $y\in\{j_x+1,j_x+2,\cdots,v-1\}$, contradicting the minimality of $u$.

By setting $k_i$ to be the largest integer such that $A_{i,k_i} = 1$, we can prove that $A_{i,j}\neq-1$ for $i\in I_2^+$ in the same way.
\end{proof}
\begin{proposition}\label{prop:I-maximal}
Suppose that $I\subset[n-1]$ has cardinality $n-2$, then $\ASM^I(n)=S_n^I$.
\end{proposition}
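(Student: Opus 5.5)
Since $|I|=n-2$, we have $I=[n-1]\setminus\{k\}$ for a single $k\in[n-1]$. Hence $I$ is the disjoint union of $I_1=\{1,\dots,k-1\}$ and $I_2=\{k+1,\dots,n-1\}$, either of which may be empty. We apply Lemma~\ref{lem:In-2} with $s=k-1$ and $t=k+1$. The rows covered are $I_1\cup I_2\cup\{s+1,n\}=[n]$, so every $A\in\ASM^I(n)$ has no entry equal to $-1$. When $I_1$ or $I_2$ is empty, the lemma's argument still applies to the nonempty block together with the adjacent row. Alternatively, row $k$ is handled through $I_1^+$ when $k=1$, and row $n$ through $I_2^+$ when $k=n-1$. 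I will check these degenerate cases by running the proof of Lemma~\ref{lem:In-2} on one side only.

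An alternating sign matrix with no $-1$ entries has nonnegative entries and row and column sums $1$. It is therefore a permutation matrix $\dot w$ for some $w\in S_n$. It remains to show $w\in S_n^I$, that is, $w(i)<w(i+1)$ for all $i\in I$. Suppose instead that $w(i)>w(i+1)$ for some $i\in I$, and set $j=w(i+1)$. Then $r_w(i,j)=r_w(i-1,j)$, because $w(i)>j$. Also $r_w(i+1,j)=r_w(i,j)+1$. This is exactly the forbidden pattern $(k',k',k'+1)$ in the definition of $\ASM^I(n)$, a contradiction. Hence $\ASM^I(n)\subseteq S_n^I$.

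For the reverse inclusion, take $w\in S_n^I$ and $i\in I$. Then $w(i)<w(i+1)$. Suppose $r_w(i,j)=r_w(i-1,j)$, which means $w(i)>j$. Then $w(i+1)>j$ as well, so $r_w(i+1,j)=r_w(i,j)$. Thus $\dot w$ satisfies the condition of Theorem~\ref{thm:main}, giving $S_n^I\subseteq\ASM^I(n)$.

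The main step is the appeal to Lemma~\ref{lem:In-2}. The only care needed is confirming that its index ranges cover every row, including the boundary cases $k=1$ and $k=n-1$. In those cases one of $I_1,I_2$ is empty, and the single-sided version of the lemma's argument must still reach rows $1$ through $n$.
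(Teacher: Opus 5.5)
Your proof is correct and follows the paper's route: the paper likewise applies Lemma~\ref{lem:In-2} with $I_1=\{1,\dots,a-1\}$ and $I_2=\{a+1,\dots,n-1\}$ to exclude $-1$ entries from every row, and concludes that the matrix is a permutation matrix. You also check explicitly that $\dot w\in\ASM^I(n)$ exactly when $w\in S_n^I$, and you handle the boundary cases $k=1$ and $k=n-1$, where the only extra row is row $1$ or row $n$ and these never contain a $-1$ in any ASM; both are details the paper leaves implicit.
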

\begin{proof}
Suppose $[n-1]=I\sqcup\{a\}$. Given $A\in\ASM^I(n)$, we set $I_1=\{1,2,\cdots,a-1\}$ and $I_2=\{a+1,a+2,\cdots,n-1\}$ in Lemma \ref{lem:In-2}. Then $A_{i,j}\neq-1$ for all $i\in[n]$. So $A\in S_n^I$.
\end{proof}
\subsection{$I=\{t,t+1\cdots,n-1\}$}
\begin{proof}[Proof of Proposition~\ref{prop:J=1...n-t}]
Readers are refereed to Section~\ref{sec:prelim} for the bijection $\varphi:\ASM(n)\rightarrow \MT(1,2,\ldots,n)$ from alternating sign matrices of size $n$ to the set of monotone triangles with bottom row $(1,2,\ldots,n)$.

Given $A\in\ASM^I(n)$, the first $(t-1)$ rows of $\varphi(A)$ form a monotone triangle of size $(t-1)$ where the entries are less than or equal to $n$. For $A\in\ASM^I(n)$, $A_{i,j}\neq-1$ for all $i\in\{t,t+1,\cdots,n-1\}$ by Lemma~\ref{lem:ASMI}. Thus, there is exactly one $1$ in each of these rows. Let $A_{i,j_i} = 1$ for $i\in\{t,t+1,\cdots,n-1\}$. By the definition of $\ASM^I(n)$, the sequence $\{j_t,j_{t+1},\cdots,j_{n-1}\}$ must be strictly increasing. So $A$ is determined by its first $(t-1)$ rows. Thus, the map $\varphi$ restricted to the first $(t-1)$ rows is injective. This map is also surjective since whole procedure can be reversed. An example is shown in Figure~\ref{fig:bijection-ASMI-MT-example}.
\end{proof}
\begin{figure}[h!]
\centering
\[\begin{pmatrix}
    0&0&0&1&0&0\\
    0&0&1&-1&1&0\\
    1&0&-1&1&-1&1\\
    0&1&0&0&0&0\\
    0&0&1&0&0&0\\
    0&0&0&0&1&0
\end{pmatrix}\qquad\begin{array}{c}
 4 \\
 3 \quad 5 \\
 1 \quad 4 \quad 6\end{array}\]
\caption{An example of the bijection between $\ASM^{\{t,t+1,\ldots,n-1\}}(n)$ (left) and monotone triangles with entries $\leq n$ (right), with $t=4$ and $n=6$.}
\label{fig:bijection-ASMI-MT-example}
\end{figure}

We can use the following theorem to provide exact formulas for $|\ASM^{\{t,t+1,\ldots,n-1\}}(n)|$ for small values of $t$, while an exact formula for general $t$ seems out of reach.
\begin{theorem}[\cite{Fis06}]\label{thm:MT_num}
    The number of monotone triangles with $m$ rows and prescribed bottom row $(k_1,k_2,\cdots,k_m)$ is given by
    \begin{align*}
        \left(\prod_{1\leq p<q\leq n}\left(\mathrm{id}+E_{k_p}\Delta_{k_q}\right)\right)\prod_{1\leq i<j\leq n}\dfrac{k_j-k_i}{j-i}
    \end{align*}
    where $E_x$ denotes the shift operator, defined by $E_xp(x) = p(x+1)$, and $\Delta_x = E_x - \mathrm{id}$ denotes the difference operator.
\end{theorem}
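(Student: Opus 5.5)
We read the product as running over the $m$ entries of the bottom row, so the formula concerns $1\leq p<q\leq m$ and $1\leq i<j\leq m$. The plan is an induction on $m$. Let $\alpha(m;k_1,\ldots,k_m)$ denote the number of monotone triangles with bottom row $(k_1,\ldots,k_m)$. Deleting the bottom row gives the recursion
\[\alpha(m;k_1,\ldots,k_m)=\sum \alpha(m-1;l_1,\ldots,l_{m-1}),\]
where the sum runs over strictly increasing $(l_1,\ldots,l_{m-1})$ with $k_i\leq l_i\leq k_{i+1}$. The base case $m=1$ is $\alpha=1$, which matches the formula because both products are empty.

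The first step is to turn the recursion into a statement about polynomials. We extend the iterated summation $\sum_{l_i=k_i}^{k_{i+1}}$ to arbitrary integer bounds by the usual convention $\sum_{l=a}^{b}f(l)=-\sum_{l=b+1}^{a-1}f(l)$ when $b<a-1$. The strictness condition $l_i<l_{i+1}$ is the obstacle to writing the recursion as a plain iterated sum. To remove it, we write the sum as an unrestricted iterated sum and subtract the terms with $l_i=l_{i+1}$. We then show these terms cancel provided the inner polynomial $A_{m-1}$ satisfies a symmetry property. The natural candidate for this property is: $(\mathrm{id}+E_{l_{i+1}}\Delta_{l_i})A_{m-1}$ is antisymmetric under $l_i\leftrightarrow l_{i+1}$. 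Under this assumption, the recursion reads $\alpha(m;\cdot)=\sum_{l_1=k_1}^{k_2}\cdots\sum_{l_{m-1}=k_{m-1}}^{k_m}A_{m-1}(l)$ for all increasing $k$. The right-hand side is a polynomial $A_m$ in $k$.

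The second step is to verify that the proposed operator formula
\[A_m(k)=\prod_{p<q}(\mathrm{id}+E_{k_p}\Delta_{k_q})\prod_{i<j}\frac{k_j-k_i}{j-i}\]
has two properties. It satisfies the extended recursion, and it inherits the symmetry property needed at the next level. For the symmetry, $\prod_{i<j}(k_j-k_i)$ is antisymmetric. The operator product, once the factor $(\mathrm{id}+E_{k_p}\Delta_{k_{p+1}})$ is singled out, is built so that conjugating by the swap turns the remaining factors into each other. This gives the antisymmetry of $(\mathrm{id}+E_{k_{p+1}}\Delta_{k_p})A_m$ with the required reindexing.

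For the recursion itself, we apply the difference operators $\Delta_{k_1},\ldots,\Delta_{k_m}$ to the summed expression, which strips off the sums at the boundaries. This reduces the claim to an identity between polynomials of degree at most $m-1$ in each variable. Such a polynomial is determined by its values on the set of increasing integer tuples, or equivalently by finitely many coefficients. It can also be characterized by the antisymmetry conditions together with the degree bound: the space of polynomials of degree $\leq m-1$ per variable satisfying all $m-1$ antisymmetry conditions is one-dimensional up to the normalisation fixed by $\alpha(m;1,\ldots,m)=1$. Both $\alpha(m;\cdot)$, via its recursion, and the operator formula lie in this space, so they agree.

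The main obstacle is this uniqueness and antisymmetry step. It requires a genuine computation showing that the operator $V_{p}=\mathrm{id}+E_{k_p}\Delta_{k_{p+1}}$ intertwines correctly with the swap $k_p\leftrightarrow k_{p+1}$, including the shift by one. It also requires showing that the solution space is one-dimensional. For the latter, we would first try to identify the space with the image of the antisymmetric polynomials $\prod_{i<j}(k_j-k_i)$ under the invertible operator $\prod_{p<q}V_{p,q}$ and then count dimensions. As a fallback, we would check the formula directly for $m=2,3$ and push the induction through the explicit boundary-sum computation.
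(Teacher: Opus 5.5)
The paper does not prove this statement; it quotes it from Fischer~\cite{Fis06}. Your outline therefore has to stand on its own. You read the products as running over $1\le p<q\le m$, which is the intended reading. Your closing uniqueness argument is correct, and the step you call the main obstacle is routine. Put $\Phi=\prod_{p<q}(\mathrm{id}+E_{k_p}\Delta_{k_q})$. Conjugation by $k_i\leftrightarrow k_{i+1}$ permutes the commuting factors of $(\mathrm{id}+E_{k_{i+1}}\Delta_{k_i})\Phi$: it exchanges the new factor with the one for the pair $(i,i+1)$. So this operator is symmetric under the swap. These operators are unipotent on polynomials of degree $\le m-1$ in each variable. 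Hence a polynomial $f$ in that space satisfies your $m-1$ conditions iff $\Phi^{-1}f$ is antisymmetric, i.e.\ iff $f$ is a multiple of $\Phi\prod_{i<j}(k_j-k_i)$.

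The proof breaks before that, because your first step is false. For $m=3$, take $A_2(l_1,l_2)=l_2-l_1+1$. It satisfies your hypothesis, since $(\mathrm{id}+E_{l_2}\Delta_{l_1})A_2=l_2-l_1$. Yet the excluded terms $l_1=l_2=k_2$ contribute $A_2(k_2,k_2)=1$. At $k=(1,2,3)$ the plain iterated sum is $8$, while there are $7$ monotone triangles. The coincidence terms never cancel and have to be kept. For increasing $k$, write $x_i$ for the upper limit of $l_{i-1}$ and $y_i$ for the lower limit of $l_i$. Inclusion--exclusion gives the strict sum as $\prod_{i=2}^{m-1}E_{x_i}^{-1}(\mathrm{id}+E_{y_i}\Delta_{x_i})$ applied to $\sum_{l_1=k_1}^{x_2}\sum_{l_2=y_2}^{x_3}\cdots\sum_{l_{m-1}=y_{m-1}}^{k_m}A_{m-1}(l)$, then specialised at $x_i=y_i=k_i$. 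That is the correct replacement for your first step.

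Consequently, the identity $A_m=\sum_{l_1=k_1}^{k_2}\cdots\sum_{l_{m-1}=k_{m-1}}^{k_m}A_{m-1}$ that your second step sets out to verify is false. For $m=3$ the right-hand side exceeds the count by $1$, so it does not even lie in your one-dimensional space. More importantly, once the recursion is corrected, the whole content of the theorem along your route is the claim you dispose of with ``via its recursion''. The claim is this: if $A$ has degree $\le m-2$ in each variable and $(\mathrm{id}+E_{l_{j+1}}\Delta_{l_j})A$ is antisymmetric in $(l_j,l_{j+1})$ for every $j$, then the strict summation of $A$ has degree $\le m-1$ in each $k_i$ and satisfies the corresponding $m-1$ conditions in $k$. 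Neither half is formal. An interior $k_i$ is a limit of two sums, so the naive degree bound is $2m-2$, and $\Delta_{k_i}$ does not simply strip off a sum. Antisymmetry in $(k_i,k_{i+1})$ needs an honest computation with the sums over $l_{i-1},l_i,l_{i+1}$ and the coincidence corrections at both $k_i$ and $k_{i+1}$. Your proposal gives no argument for either.

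Finally, the normalisation $\alpha(m;1,\ldots,m)=1$ is wrong. Monotone triangles with bottom row $(1,\ldots,m)$ are in bijection with $\ASM(m)$, so this value is $|\ASM(m)|$; already $\alpha(2;1,2)=2$. Fix the constant by comparing top-degree homogeneous parts. For the operator formula this part is $\prod_{i<j}\frac{k_j-k_i}{j-i}$, because each $E_{k_p}\Delta_{k_q}$ lowers the total degree. For $\alpha(m;\cdot)$ it is the volume of the Gelfand--Tsetlin polytope, which is the same product.
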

\begin{theorem}\label{cor:n-t}
We have the following enumeration results:
\begin{align*}
|\ASM^{\{3,4,\cdots,n-1\}}(n)|&=\dfrac{(n-1)n(n+4)}{6},\quad\text{and}\\
|\ASM^{\{4,5,\cdots,n-1\}}(n)|&=\dfrac{(n-2)(n-1)n(n+1)(n^2+14n+54)}{360}.
\end{align*}
In general, for fixed $t$, $|\ASM^{\{t,t+1,\cdots,n-1\}}(n)|$ is a polynomial in $n$ with degree $\frac{t(t-1)}{2}$, and leading coefficient \[\prod_{j=1}^{t-2}\frac{j!}{(2j+1)!}.\]
\end{theorem}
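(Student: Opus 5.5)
By Proposition~\ref{prop:J=1...n-t}, $|\ASM^{\{t,\ldots,n-1\}}(n)|$ equals the number of monotone triangles with $t-1$ rows and all entries at most $n$. Every such triangle is determined by its bottom row $1\le k_1<\cdots<k_{t-1}\le n$ together with the rows above it. So I would write
\[
|\ASM^{\{t,\ldots,n-1\}}(n)|=\sum_{1\le k_1<\cdots<k_{t-1}\le n}\alpha(t-1;k_1,\ldots,k_{t-1}),
\]
where $\alpha(m;k_1,\ldots,k_m)$ denotes the number of monotone triangles with bottom row $(k_1,\ldots,k_m)$. Theorem~\ref{thm:MT_num}, read with $n$ replaced by $m=t-1$, expresses $\alpha$ as a polynomial in $k_1,\ldots,k_m$ of total degree $\binom{m}{2}$.

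For the small cases I would compute $\alpha$ explicitly. For $m=2$ we get $\alpha(2;k_1,k_2)=k_2-k_1+1$, and summing over $1\le k_1<k_2\le n$ gives $\sum_{d=1}^{n-1}(n-d)(d+1)$. This simplifies to $(n-1)n(n+4)/6$; a check at $n=2$ gives $2$, which is correct. For $m=3$ I would expand $(\mathrm{id}+E_{k_1}\Delta_{k_2})(\mathrm{id}+E_{k_1}\Delta_{k_3})(\mathrm{id}+E_{k_2}\Delta_{k_3})$ applied to $(k_2-k_1)(k_3-k_1)(k_3-k_2)/2$. Alternatively, I would sum $\prod (k'_j-k'_i+\cdots)$ over interlacing middle rows directly. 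This produces a cubic polynomial $\alpha(3;k)$. Summing it over $1\le k_1<k_2<k_3\le n$ using standard power sums, or equivalently interpolating from the values $n=3,\ldots,9$, gives the stated sextic. This is routine, and I would verify it against small direct counts, e.g.\ $n=3$ gives $7$.

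For the general statement, the count is a sum of a polynomial of degree $\binom{m}{2}$ over the lattice points of the simplex $\{1\le k_1<\cdots<k_m\le n\}$. It is therefore a polynomial in $n$ of degree $\binom{m}{2}+m=\binom{t}{2}=t(t-1)/2$. For the leading coefficient, the operator $\prod(\mathrm{id}+E\Delta)$ only lowers degree except through its identity term. So the top-degree part of $\alpha$ is $\prod_{i<j}(k_j-k_i)/(j-i)$, and the leading coefficient in $n$ is
\[
\frac{1}{\prod_{j=1}^{m-1} j!}\int_{0<x_1<\cdots<x_m<1}\prod_{i<j}(x_j-x_i)\,dx .
\]
The integral is a Selberg integral with $\alpha=\beta=1$, $\gamma=1/2$, divided by $m!$. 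This is the main computational step. I would evaluate it with Selberg's formula $S_m(1,1,\tfrac12)=\prod_{j=0}^{m-1}\frac{\Gamma(1+j/2)^2\Gamma(1+(j+1)/2)}{\Gamma(2+(m+j-1)/2)\Gamma(3/2)}$. Then I would simplify using the duplication formula until the product telescopes to $\prod_{j=1}^{t-2}j!/(2j+1)!$. As a check, $t=3$ gives $1/6$ and $t=4$ gives $\frac{1}{6}\cdot\frac{2}{120}=1/360$, both matching the explicit formulas. Keeping the Gamma-function bookkeeping straight in this simplification is where I expect the difficulty. If it resists, I would instead prove the product formula for this integral by induction on $m$, using the known evaluation $\int_{\Delta}\prod_{i<j}(x_j-x_i)\,dx=\frac{1}{m!}\prod_{j=0}^{m-1}\frac{\Gamma(1+j/2)^2\Gamma(1+(j+1)/2)}{\Gamma(3/2)\Gamma(2+(m+j-1)/2)}$ and comparing ratios between consecutive $m$.
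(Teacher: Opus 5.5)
Your proposal is correct and matches the paper's proof step for step. Both arguments reduce, via the monotone-triangle bijection, to summing $\alpha(k_1,\ldots,k_{t-1})$ over $1\le k_1<\cdots<k_{t-1}\le n$, and both do the cases $t=3,4$ by explicit summation. For general $t$, both isolate the top-degree part $\prod_{i<j}(k_j-k_i)/(j-i)$, pass to the integral over the ordered simplex, evaluate it with Selberg's formula at $(1,1,\tfrac12)$, and simplify with the duplication formula; your fallback of comparing consecutive $m$ is exactly the paper's $R_t/R_{t-1}$ telescoping. The only difference is that you would derive the cubic $\alpha(k_1,k_2,k_3)$ from the operator formula rather than quote Fischer's explicit expression, which is equivalent.
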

\begin{proof}
Denote the number of monotone triangles with $m$ rows and prescribed bottom row $(k_1,k_2,\cdots,k_m)$ by $\alpha(k_1,k_2,\cdots,k_m)$. Then $\alpha(k_1,k_2)=1-k_1+k_2$. We calculate that 
\begin{align*}
|\ASM^{\{3,4,\cdots,n-1\}}(n)|&=\sum_{1\leq k_1<k_2\leq n} (1-k_1+k_2)\\
&=\sum_{1\leq t\leq n-1}\sum_{1\leq k_1\leq n-t}(1-k_1+(k_1+t))\\
&=\sum_{1\leq t\leq n-1}(n-t)(1+t)\\
&=\dfrac{(n-1)n(n+4)}{6}.
\end{align*}
The formula for $|\ASM^{\{4,5,\ldots,n-1\}}(n)|$ is computed in the same way. Using the calculation result in \cite{Fis06}, $\alpha(k_1,k_2,k_3)=\frac{1}{2}(-3k_1+k_1^2+2k_1k_2-k_1^2k_2-2k_2^2+k_1k_2^2+3k_3-4k_1k_3+k_1^2k_3+2k_2k_3-k_2^2k_3+k_3^2-k_1k_3^2+k_2k_3^2)$. Skipping technical calculations, we have
\begin{align*}
|\ASM^{\{4,5,\cdots,n-1\}}(n)|&=\sum_{1\leq k_1<k_2<k_3\leq n}\alpha(k_1,k_2,k_3)\\
&=\dfrac{(n-2)(n-1)n(n+1)(n^2+14n+54)}{360}.    
\end{align*}
By Theorem \ref{thm:MT_num}, the highest degree part of $\alpha(k_1,k_2,\cdots,k_{t-1})$ is $\prod_{1\leq i<j\leq t-1}\frac{k_j-k_i}{j-i}$, so the highest degree part of $|\ASM^{\{t,t+1,\cdots,n-1\}}(n)|$ is \[\sum_{1\leq k_1<\cdots<k_{t-1}\leq n} \prod_{1\leq i<j\leq t-1}\dfrac{k_j-k_i}{j-i}=n^{\frac{(t-1)(t-2)}{2}}\sum_{\substack{0<k_1<\cdots<k_{t-1}\leq 1,\\k_i\in\{\frac{1}{n},\cdots,\frac{n-1}{n},1\}}}\prod_{1\leq i<j\leq t-1}\dfrac{k_j-k_i}{j-i}.\] As $n\rightarrow\infty$, it is asymptotic to the following term, which can be calculated using Selburg integral (see for example \cite{Sel44}),
\begin{align*}
&\quad\;\;n^{\frac{(t-1)(t-2)}{2}}\cdot n^{t-1}\int_{0<x_1<\cdots<x_{t-1}\leq1}\prod_{1\leq i<j\leq t-1}\dfrac{x_j-x_i}{j-i}\mathrm{d}x_1\cdots\mathrm{d}x_{t-1}\\
&=\dfrac{n^{\frac{t(t-1)}{2}}}{(t-1)!}\prod_{1\leq i<j\leq t-1}\frac{1}{j-i}\cdot\int_{0<x_1,\cdots,x_{t-1}\leq1}\prod_{1\leq i<j\leq t-1}|x_j-x_i|\mathrm{d}x_1\cdots\mathrm{d}x_{t-1}\\
&=\dfrac{n^{\frac{t(t-1)}{2}}}{(t-1)!}\prod_{j=1}^{t-2}\frac{1}{j!}\cdot \prod_{j=0}^{t-2}\dfrac{\Gamma(1+\frac{j}{2})^2\Gamma(1+\frac{j+1}{2})}{\Gamma(2+\frac{t+j-2}{2})\Gamma(\frac{3}{2})}\\
&=\dfrac{n^{\frac{t(t-1)}{2}}}{(t-1)!}\prod_{j=0}^{t-2}\frac{1}{j!}\cdot\prod_{j=0}^{t-2}\dfrac{2^{-j-1}\sqrt{\pi}(j+1)!\Gamma(1+\frac{j}{2})}{\frac{\sqrt{\pi}}{2}\Gamma(\frac{t+j+2}{2})}\\
&=n^{\frac{t(t-1)}{2}}2^{-\frac{(t-1)(t-2)}{2}} R_t,
\end{align*}
where $R_t=\prod_{j=0}^{t-2}\left(\Gamma(\frac{j+2}{2})/\Gamma(\frac{t+j+2}{2})\right).$  We have \[\frac{R_t}{R_{t-1}}=\frac{\Gamma(\frac{t}{2})\Gamma(\frac{t+1}{2})}{\Gamma(t)\Gamma(\frac{2t-1}{2})}=\frac{2^{1-t}\sqrt{\pi}}{\Gamma(t-\frac{1}{2})}=\frac{2^{t-2}(t-2)!}{(2t-3)!}.\] Thus, \[R_t=R_2\prod_{j=3}^{t}\dfrac{R_j}{R_{j-1}}=2^{\frac{(t-1)(t-2)}{2}}\prod_{j=1}^{t-2}\frac{j!}{(2j+1)!}.\] We conclude that as $n$ tends to infinity, \[|\ASM^{\{t,t+1,\cdots,n-1\}}|\sim \prod_{j=1}^{t-2}\frac{j!}{(2j+1)!}\cdot n^{\frac{t(t-1)}{2}}.\]
\end{proof}
\subsection{Further discussion}
There are other technical tools and languages that can be helpful towards showing that the MacNeille completion of $S_n^I$ is exactly $\ASM^I(n)$.
\begin{definition}[\cite{Rea02}]
An equivalence relation $\Theta$ on a poset $P$ is called a \emph{congruence} if
\begin{itemize}
\item every equivalence class is an interval;
\item the map that sends each $a\in P$ to the minimal element in $[a]_\Theta$ is order-preserving;
\item the map that sends each $a\in P$ to the maximal element in $[a]_\Theta$ is order-preserving.
\end{itemize} 
\end{definition}
\begin{theorem}[\cite{Rea02}]\label{thm:reading}
    Let $P$ be a finite poset with MacNeille completion $L(P)$, and let $\Theta$ be an equivalence relation on $P$. Then $\Theta$ is a congruence on $P$ if and only if there is a congruence $L(\Theta)$ on $L(P)$ which restricts exactly to $\Theta$, in which case
    \begin{itemize}
        \item $L(\Theta)$ is the unique congruence on $L(P)$ which restricts exactly to $\Theta$, and
        \item the MacNeille completion $L(P/\Theta)$ is naturally isomorphic to $L(P) / L(\Theta)$.
    \end{itemize}
\end{theorem}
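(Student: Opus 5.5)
The plan is to work entirely through the density characterization of Proposition~\ref{prop:MacNeille0}. Write $L=L(P)$, and identify $P$ with its image in $L$, so that every $x\in L$ satisfies $x=\bigvee_{y\in P,\,y\leq x}y=\bigwedge_{z\in P,\,z\geq x}z$. For a congruence $\Theta$ on $P$, let $\pi_\downarrow,\pi^\uparrow\colon P\to P$ be the order-preserving maps sending $a$ to the bottom and top of $[a]_\Theta$. I will use the standard finite-lattice fact that an equivalence relation on a finite lattice is a lattice congruence if and only if its classes are intervals and the two projection maps are order-preserving. This lets the whole argument take place at the level of projections.

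\textbf{The ``only if'' direction and uniqueness.} Given $\Theta$, I would define extended projections on $L$ by
\[\Pi_\downarrow(x)=\bigvee_{y\in P,\,y\leq x}\pi_\downarrow(y),\qquad \Pi^\uparrow(x)=\bigwedge_{z\in P,\,z\geq x}\pi^\uparrow(z).\]
I then declare $x\equiv x'$ in $L(\Theta)$ if and only if $\Pi_\downarrow(x)=\Pi_\downarrow(x')$. The steps in order are:
\begin{enumerate}
\item $\Pi_\downarrow$ and $\Pi^\uparrow$ are order-preserving. This is clear, since enlarging $x$ enlarges (resp. shrinks) the index set.
\item Both maps restrict to $\pi_\downarrow,\pi^\uparrow$ on $P$. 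This uses that $\pi_\downarrow$ is order-preserving, so the join is attained at $y=x$.
\item $\Pi_\downarrow(x)\leq x\leq\Pi^\uparrow(x)$.
\item $\Pi_\downarrow\Pi^\uparrow=\Pi_\downarrow$ and $\Pi^\uparrow\Pi_\downarrow=\Pi^\uparrow$. These make the fibres of $\Pi_\downarrow$ exactly the intervals $[\Pi_\downarrow x,\Pi^\uparrow x]$.
\end{enumerate}
The identities in step~4 are where the interval property of the classes of $\Theta$ enters. For instance, for $y\in P$ with $y\leq\Pi^\uparrow(x)$, I need $\pi_\downarrow(y)\leq\Pi_\downarrow(x)$. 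I would get this by writing $\Pi^\uparrow(x)$ as a meet of tops of classes and using that $\pi_\downarrow\pi^\uparrow=\pi_\downarrow$ on $P$.

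For uniqueness, suppose $\Psi$ is any congruence on $L$ restricting to $\Theta$. For $x\in L$ we have $x=\bigvee_{y\in P,\,y\le x}y$, and joins respect $\Psi$, so $x\mathrel{\Psi}\bigvee_{y\le x}\pi_\downarrow(y)=\Pi_\downarrow(x)$. Dually $x\mathrel{\Psi}\Pi^\uparrow(x)$. Hence every $\Psi$-class contains the interval $[\Pi_\downarrow x,\Pi^\uparrow x]$. A further argument then shows equality, since two distinct such intervals cannot be merged without merging two $\Theta$-classes in $P$.

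\textbf{The ``if'' direction.} Suppose $\Psi$ is a lattice congruence on $L$ restricting to $\Theta$. The restricted projections are order-preserving once I know that the bottom and top of each $\Psi$-class meeting $P$ can be taken inside $P$. I would show this by noting that the bottom of $[a]_\Psi$ equals $\bigvee\{y\in P: y\le a,\ y\mathrel{\Psi}a\}$ together with density; this step needs care.

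\textbf{The quotient and the expected obstacle.} Finally, $L/L(\Theta)$ is a lattice, and $P/\Theta$ embeds in it via $[a]_\Theta\mapsto[a]_{L(\Theta)}$. Images of join-dense and meet-dense subsets under the surjective lattice map $L\to L/L(\Theta)$ remain join- and meet-dense. Therefore Proposition~\ref{prop:MacNeille0} yields $L(P/\Theta)\cong L/L(\Theta)$. I expect the main obstacle to be step~4 together with the uniqueness argument: verifying that the $\Pi_\downarrow$-fibres are genuinely intervals and that this relation is compatible with meets as well as joins, and not merely with one of them.
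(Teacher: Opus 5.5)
The paper does not prove this statement; it is quoted from \cite{Rea02}. So your argument has to stand on its own, and it has a genuine gap.

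\textbf{The main gap: ``exactly'' is not the same as ``restricts''.} You treat ``$\Psi$ restricts exactly to $\Theta$'' as merely $\Psi|_P=\Theta$. Under that reading both the ``if'' direction and uniqueness are false.
\begin{itemize}
\item For the ``if'' direction, take $P=\{a,b,c\}$ with $a,b<c$, so $L(P)=\{\hat0,a,b,c\}\cong\mathbf{2}\times\mathbf{2}$. The lattice congruence with classes $\{\hat0,a\}$ and $\{b,c\}$ restricts to the relation on $P$ with classes $\{a\}$ and $\{b,c\}$. Its bottom map sends $a\mapsto a$ and $c\mapsto b$, so it is not order-preserving, and the restriction is not a congruence on $P$.
\item For uniqueness, take $P=\{a,b\}$ an antichain. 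Both the trivial congruence on $L(P)=\{\hat0,a,b,\hat1\}$ and the one with classes $\{\hat0,a\}$, $\{b,\hat1\}$ restrict to the identity relation on $P$.
\end{itemize}
So the two steps you leave vague cannot be carried out. The bottom of $[a]_\Psi$ need not lie in $P$; also, your formula $\bigvee\{y\in P: y\le a,\ y\mathrel{\Psi}a\}$ is just $a$ itself. And distinct intervals $[\Pi_\downarrow x,\Pi^\uparrow x]$ can be merged without merging $\Theta$-classes.

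The missing ingredient is the extra condition the word ``exactly'' must carry: for every $a\in P$, the bottom $\psi_\downarrow(a)$ and the top $\psi^\uparrow(a)$ of $[a]_\Psi$ lie in $P$. With that hypothesis both parts are short.
\begin{itemize}
\item The ``if'' direction is immediate: $\pi_\downarrow=\psi_\downarrow|_P$, $\pi^\uparrow=\psi^\uparrow|_P$, and $[a]_\Theta=[\psi_\downarrow a,\psi^\uparrow a]\cap P$.
\item Uniqueness follows because the bottom projection of a lattice congruence preserves joins. Hence $\psi_\downarrow(x)=\bigvee_{y\in P,\,y\le x}\psi_\downarrow(y)=\bigvee_{y\in P,\,y\le x}\pi_\downarrow(y)=\Pi_\downarrow(x)$, and dually $\psi^\uparrow=\Pi^\uparrow$. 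This pins down every $\Psi$-class.
\end{itemize}
Your $L(\Theta)$ does have this property, by your step~2, once step~4 is established.

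\textbf{Step 4 is only half done.} Your construction for the ``only if'' direction is the right one, but your hint stops halfway. From $y\le\Pi^\uparrow(x)$ and $\pi_\downarrow\pi^\uparrow=\pi_\downarrow$ you only get $\pi_\downarrow(y)\le\bigwedge_{z\in P,\,z\ge x}\pi_\downarrow(z)$. You still need this meet to be at most $\Pi_\downarrow(x)$. This is where meet-density and the monotonicity of $\pi^\uparrow$ are used:
\begin{itemize}
\item Let $w\in P$ with $w\ge\Pi_\downarrow(x)$. For every $y'\in P$ with $y'\le x$ we get $\pi^\uparrow(w)\ge\pi^\uparrow(\pi_\downarrow y')=\pi^\uparrow(y')\ge y'$.
\item By join-density, $\pi^\uparrow(w)\ge x$.
\item Hence $\bigwedge_{z\in P,\,z\ge x}\pi_\downarrow(z)\le\pi_\downarrow(\pi^\uparrow w)=\pi_\downarrow(w)\le w$.
\item Taking the meet over all such $w$ gives the claim, by meet-density.
\end{itemize}
The dual argument gives $\Pi^\uparrow\Pi_\downarrow=\Pi^\uparrow$. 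The rest of your outline then goes through: the fibres are intervals, you apply the standard characterization of lattice congruences, and the density argument gives $L(P/\Theta)\cong L(P)/L(\Theta)$. For that last step you must check that $[a]_\Theta\mapsto[a]_{L(\Theta)}$ is an order embedding, which follows from $\Pi_\downarrow|_P=\pi_\downarrow$ and exactness.
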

Now let $P$ be the Bruhat order on $S_n$ and $\Theta$ be the equivalence relation that $[w]_\Theta = [v]_\Theta$ if $wS_n(I)=vS_n(I)$. It can be shown that $L(\Theta)$ is exactly the relation $=_I$ in Definition \ref{def:leqI}. Theorem~\ref{thm:reading} now states that the MacNeille completion of $S_n^I$ is $\ASM(n)/=_I$, which can be shown to be isomorphic to $\ASM^I(n)$ after some technical arguments.

Another tool is the operator $\pi_i$, which was first defined on monotone triangles in \cite{HR20}, and presented in the language of ASMs in \cite{EKW25}.
\begin{definition}[\cite{EKW25}]
Given $A\in\ASM(n)$ and $i\in[n-1]$, define \[\pi_i(A):=\min\{B\in\ASM(n)\mid r_A(a,b)=r_B(a,b)\text{ for all }a,b\in [n]\text{ with }a\neq i\}.\]
\end{definition}
Since these operators satisfy the commutation relations $\pi_i\circ\pi_j=\pi_j\circ\pi_i$ for $|i-j| > 1$, and the braid relations $\pi_i\circ\pi_{i+1}\circ\pi_i=\pi_{i+1}\circ\pi_i\circ\pi_{i+1}$ for $i\in[n-2]$, one can define $\pi_w:=\pi_{i_1}\circ\cdots\circ\pi_{i_{\ell}}$ for arbitrary $w\in S_n$ with a reduced expression $w=s_{i_1}\cdots s_{i_{\ell}}$.

Given $I\subset[n-1]$, with some technical arguments, we can prove that
\[\pi_I(A)=\min\{B\in\ASM(n)\mid r_A(a,b)=r_B(a,b)\text{ for all }a,b\in[n]\text{ with }a\notin I\},\]
where $\pi_I:=\pi_{w_0^I}$ and $w_0^I$ is the maximal element in $S_n^I$. This is a slight generalization of the $\pi_i$'s in \cite{EKW25}.
One can show that $\pi_I(\ASM(n))=\ASM^I(n)$.

\begin{proposition}
For $w\in S_n$, $\pi_w(\ASM(n))$ is a lattice.
\end{proposition}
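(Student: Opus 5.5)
We plan to mimic the proof of Proposition~\ref{ASMIlattice}: show that $\pi_w(\ASM(n))$ contains the identity matrix as its minimum and is closed under the join of $\ASM(n)$ (entrywise minimum of rank matrices). Proposition~\ref{lattice} then finishes the argument. The key is a description of the image $\pi_w(\ASM(n))$ by local conditions on rank matrices that are preserved under entrywise minima, generalizing the ``no descent in rows of $I$'' condition.

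The first step is to understand a single operator. By definition, $\pi_i(A)$ keeps every row of $r_A$ except row $i$. In row $i$ it takes the entrywise largest admissible values compatible with the corner sum conditions, namely
\[r_{\pi_i(A)}(i,j)=\min\{r_A(i+1,j),\,r_A(i-1,j)+1\}.\]
This value exists and is determined by an argument like Lemma~\ref{lem:ASMI}, and the inequalities of Lemma~\ref{lem:ineqt} show that the result is again a corner sum matrix. Consequently $\pi_i$ is idempotent, and $A\in\pi_i(\ASM(n))$ if and only if row $i$ of $A$ has no descent, i.e.\ $r_A(i,j)\in\{r_A(i-1,j)+1,r_A(i+1,j)\}$ for all $j$. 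We will also check that $\pi_i$ is order preserving and satisfies $\pi_i(A)\le A$ in the Bruhat order, since it only increases rank entries. Finally, we want $\pi_i(A\vee B)=\pi_i(A)\vee\pi_i(B)$. The $\min$ formula above commutes with entrywise minima in rows $i\pm1$ only up to the inequalities of Lemma~\ref{lem:ineqt}, so this step needs care. We should expect only the inequality $\pi_i(A\vee B)\ge\pi_i(A)\vee\pi_i(B)$, together with equality in rows other than $i$.

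The second step, which I expect to be the main obstacle, is to identify $\pi_w(\ASM(n))$. For a reduced word $w=s_{i_1}\cdots s_{i_\ell}$ we have $\pi_w(\ASM(n))\subseteq\pi_{i_1}(\ASM(n))$, but the images are nested through compositions, so they are not intersections of single-row conditions. Instead, I would prove closure under joins directly by induction on $\ell(w)$. Suppose $X=\pi_w(\ASM(n))$ is closed under $\vee$ and $w'=s_iw>w$, so that $\pi_{w'}(\ASM(n))=\pi_i(X)$. Given $\pi_i(A),\pi_i(B)$ with $A,B\in X$, set $C=A\vee B\in X$. I claim $\pi_i(C)=\pi_i(A)\vee\pi_i(B)$. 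Both sides agree off row $i$, since $\pi_i$ does not change those rows and the join is entrywise. In row $i$ we need
\[\min\{c_{i+1,j},c_{i-1,j}+1\}=\min\bigl\{\min\{a_{i+1,j},a_{i-1,j}+1\},\min\{b_{i+1,j},b_{i-1,j}+1\}\bigr\},\]
where $c=\min(a,b)$ entrywise. Expanding both sides gives the minimum of the same four quantities $a_{i+1,j}, b_{i+1,j}, a_{i-1,j}+1, b_{i-1,j}+1$, so the two sides are equal. This identity is exact, so the join step is clean, and the whole subtlety reduces to confirming the formula for $\pi_i$ row by row.

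Finally, the identity matrix corresponds to the largest rank matrix $r(i,j)=\min(i,j)$. It is fixed by every $\pi_i$, since $\min\{\min(i+1,j),\min(i-1,j)+1\}=\min(i,j)$, so it lies in $\pi_w(\ASM(n))$ and is the minimum there. Since $\pi_w(\ASM(n))$ is a finite subposet of $\ASM(n)$ with a minimum and is closed under the join of $\ASM(n)$, Proposition~\ref{lattice} (in its join version) shows that it is a lattice. If the exact row-$i$ formula for $\pi_i$ turns out to be wrong, I would fall back on the argument of Proposition~\ref{ASMIlattice}, using whatever local no-descent characterization of $\pi_i$'s image holds.
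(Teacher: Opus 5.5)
Your proposal is correct and follows the paper's proof: an induction on $\ell(w)$ shows that $\pi_w(\ASM(n))$ is closed under the join of $\ASM(n)$ via $\pi_i(A\vee B)=\pi_i(A)\vee\pi_i(B)$, the identity matrix is the minimum, and Proposition~\ref{lattice} finishes. The only difference is that the paper cites \cite[Proposition~3.4]{EKW25} for the join identity, whereas you derive it from the row formula $r_{\pi_i(A)}(i,j)=\min\{r_A(i+1,j),r_A(i-1,j)+1\}$; that formula is correct, so the identity holds exactly and your first-step hedge that only an inequality might hold is unnecessary.
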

\begin{proof}
Denote $\pi_w(\ASM(n))$ by $\Pi_w$ for convenience. We prove the following statement inductively: $\Pi_w$ is a lattice, and the join operation on it coincides with that of $\ASM(n)$. 

If $w=\mathrm{id}$, $\Pi_{w}=\ASM(n)$ is a lattice. Then we assume $\Pi_w$ is a lattice for all $w\in S_n$ with $\ell(w)\leq m$. Given $w\in S_n$ with $\ell(w)=m+1$, write $w=s_iv$, where $\ell(v)=\ell(w)-1$. Then $\Pi_w=\pi_i(\Pi_v)$. Given arbitrary $A,B\in\Pi_v$, we will show that the join of $\pi_i(A)$ and $\pi_i(B)$ in $\Pi_w$ is $\pi_i(A)\vee\pi_i(B)$. By \cite[Proposition~3.4]{EKW25}, $\pi_i(A)\vee\pi_i(B)=\pi_i(A\vee B)$, and by the induction hypothesis, $A\vee B\in\Pi_v$. So $\pi_i(A)\vee\pi_i(B)\in\Pi_w$, and it is the join of $\pi_i(A)$ and $\pi_i(B)$ in $\Pi_w$. The minimal element of $\Pi_w$ is the identity matrix. By Proposition \ref{lattice}, $\Pi_w$ is a lattice.
\end{proof}
\begin{question}
Is $\pi_w(\ASM(n))$ the MacNeille completion of some subposet of $S_n$?
\end{question}
$\ASM^I(n)$ can also be nicely described using the language of six-vertex models. The six-vertex model is a square-lattice model in which each vertex satisfies the ice rule, namely, exactly two arrows point into the vertex and two arrows point out. A particular specialization is the model with domain wall boundary conditions, where arrows point inward at the left and right sides, and outward at the top and bottom. A state of the six-vertex model is an assignment of orientations to all edges of the lattice such that the ice rule at each vertex and the domain wall boundary conditions are satisfied. For a square-lattice with $n\times n$ vertices, we denote the set of its states by $\mathrm{St}(n)$. Given a state, we can convert it into an ASM by the following correspondence:

\begin{tikzpicture}[scale=0.92]

\begin{scope}[xshift=0cm]
  \draw[midarrow=2.2mm] (0,0)--(0,1);      
  \draw[midarrow=2.2mm] (0,0)--(0,-1);     
  \draw[midarrow=2.2mm] (-1,0)--(0,0);     
  \draw[midarrow=2.2mm] (1,0)--(0,0);      
  \node at (0,-1.6) {$1$};
\end{scope}

\begin{scope}[xshift=3cm]
  \draw[midarrow=2.2mm] (0,1)--(0,0);      
  \draw[midarrow=2.2mm] (0,-1)--(0,0);     
  \draw[midarrow=2.2mm] (0,0)--(-1,0);     
  \draw[midarrow=2.2mm] (0,0)--(1,0);      
  \node at (0,-1.6) {$-1$};
\end{scope}

\begin{scope}[xshift=6cm]
  \draw[midarrow=2.2mm] (0,0)--(0,1);      
  \draw[midarrow=2.2mm] (0,-1)--(0,0);     
  \draw[midarrow=2.2mm] (-1,0)--(0,0);     
  \draw[midarrow=2.2mm] (0,0)--(1,0);      
  \node at (0,-1.6) {$0$};
\end{scope}

\begin{scope}[xshift=9cm]
  \draw[midarrow=2.2mm] (0,1)--(0,0);      
  \draw[midarrow=2.2mm] (0,0)--(0,-1);     
  \draw[midarrow=2.2mm] (0,0)--(-1,0);     
  \draw[midarrow=2.2mm] (1,0)--(0,0);      
  \node at (0,-1.6) {$0$};
\end{scope}

\begin{scope}[xshift=12cm]
  \draw[midarrow=2.2mm] (0,0)--(0,1);      
  \draw[midarrow=2.2mm] (0,-1)--(0,0);     
  \draw[midarrow=2.2mm] (0,0)--(-1,0);     
  \draw[midarrow=2.2mm] (1,0)--(0,0);      
  \node at (0,-1.6) {$0$};
\end{scope}

\begin{scope}[xshift=15cm]
  \draw[midarrow=2.2mm] (0,1)--(0,0);      
  \draw[midarrow=2.2mm] (0,0)--(0,-1);     
  \draw[midarrow=2.2mm] (-1,0)--(0,0);     
  \draw[midarrow=2.2mm] (0,0)--(1,0);      
  \node at (0,-1.6) {$0$};
\end{scope}

\end{tikzpicture}

See \cite{MR1226347,MR865837} for more details. For $I\subset [n-1]$, we define $\mathrm{St}_I(n)\subset \mathrm{St}(n)$ to be the states such that if an arrow points to the right in the $i$-th row, where $i\in I$, then the arrow directly below it in the $(i+1)$-th row must also point to the right. Then the states that correspond to matrices in $\ASM^I(n)$ are exactly the states in $\mathrm{St}_I(n)$. See Figure \ref{fig:bijection-ASMI-SIXM-example} for an example, where $n=6$ and $I=\{3\}$.
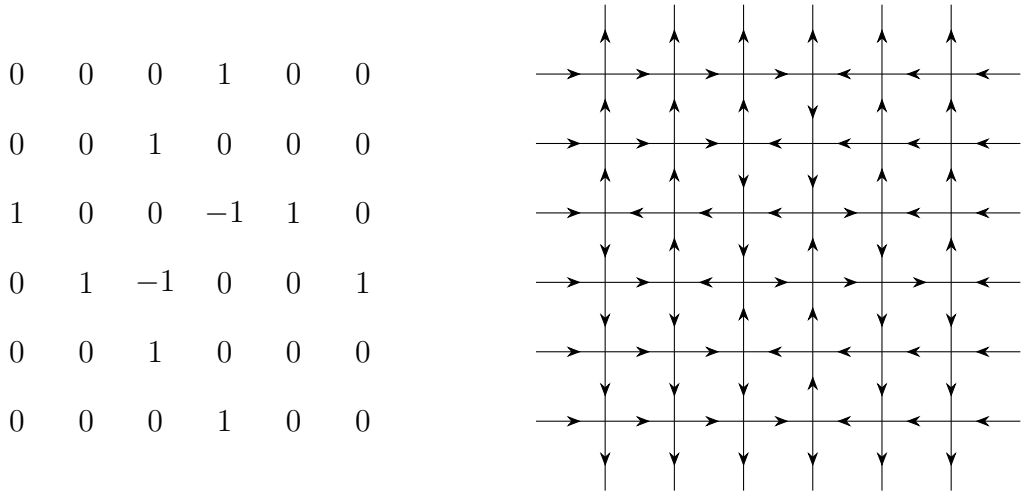
\begin{figure}[h!]
\centering
\begin{gather*}
\begin{tikzpicture}[scale=0.92]
\begin{scope}[xshift=3cm]
\node at (-11,0) {$0$};
\node at (-10,0) {$0$};
\node at (-9,0) {$0$};
\node at (-8,0) {$1$};
\node at (-7,0) {$0$};
\node at (-6,0) {$0$};
\node at (-11,-1) {$0$};
\node at (-10,-1) {$0$};
\node at (-9,-1) {$1$};
\node at (-8,-1) {$0$};
\node at (-7,-1) {$0$};
\node at (-6,-1) {$0$};
\node at (-11,-2) {$1$};
\node at (-10,-2) {$0$};
\node at (-9,-2) {$0$};
\node at (-8,-2) {$-1$};
\node at (-7,-2) {$1$};
\node at (-6,-2) {$0$};
\node at (-11,-3) {$0$};
\node at (-10,-3) {$1$};
\node at (-9,-3) {$-1$};
\node at (-8,-3) {$0$};
\node at (-7,-3) {$0$};
\node at (-6,-3) {$1$};
\node at (-11,-4) {$0$};
\node at (-10,-4) {$0$};
\node at (-9,-4) {$1$};
\node at (-8,-4) {$0$};
\node at (-7,-4) {$0$};
\node at (-6,-4) {$0$};
\node at (-11,-5) {$0$};
\node at (-10,-5) {$0$};
\node at (-9,-5) {$0$};
\node at (-8,-5) {$1$};
\node at (-7,-5) {$0$};
\node at (-6,-5) {$0$};
\draw[midarrow=1.9mm] (-3.5,0)--(-2.5,0);  
\draw[midarrow=1.9mm] (-2.5,0)--(-1.5,0);      
\draw[midarrow=1.9mm] (-1.5,0)--(-0.5,0);     
\draw[midarrow=1.9mm] (-0.5,0)--(0.5,0);    
\draw[midarrow=1.9mm] (1.5,0)--(0.5,0);     
\draw[midarrow=1.9mm] (2.5,0)--(1.5,0);    
\draw[midarrow=1.9mm] (3.5,0)--(2.5,0);
\draw[midarrow=1.9mm] (-3.5,-1)--(-2.5,-1);  
\draw[midarrow=1.9mm] (-2.5,-1)--(-1.5,-1);      
\draw[midarrow=1.9mm] (-1.5,-1)--(-0.5,-1);     
\draw[midarrow=1.9mm] (0.5,-1)--(-0.5,-1);    
\draw[midarrow=1.9mm] (1.5,-1)--(0.5,-1);     
\draw[midarrow=1.9mm] (2.5,-1)--(1.5,-1);    
\draw[midarrow=1.9mm] (3.5,-1)--(2.5,-1);
\draw[midarrow=1.9mm] (-3.5,-2)--(-2.5,-2);  
\draw[midarrow=1.9mm] (-1.5,-2)--(-2.5,-2);      
\draw[midarrow=1.9mm] (-0.5,-2)--(-1.5,-2);     
\draw[midarrow=1.9mm] (0.5,-2)--(-0.5,-2);    
\draw[midarrow=1.9mm] (0.5,-2)--(1.5,-2);     
\draw[midarrow=1.9mm] (2.5,-2)--(1.5,-2);    
\draw[midarrow=1.9mm] (3.5,-2)--(2.5,-2);  
\draw[midarrow=1.9mm] (-3.5,-3)--(-2.5,-3);  
\draw[midarrow=1.9mm] (-2.5,-3)--(-1.5,-3);      
\draw[midarrow=1.9mm] (-0.5,-3)--(-1.5,-3);     
\draw[midarrow=1.9mm] (-0.5,-3)--(0.5,-3);    
\draw[midarrow=1.9mm] (0.5,-3)--(1.5,-3);     
\draw[midarrow=1.9mm] (1.5,-3)--(2.5,-3);    
\draw[midarrow=1.9mm] (3.5,-3)--(2.5,-3);  
\draw[midarrow=1.9mm] (-3.5,-4)--(-2.5,-4);  
\draw[midarrow=1.9mm] (-2.5,-4)--(-1.5,-4);      
\draw[midarrow=1.9mm] (-1.5,-4)--(-0.5,-4);     
\draw[midarrow=1.9mm] (0.5,-4)--(-0.5,-4);    
\draw[midarrow=1.9mm] (1.5,-4)--(0.5,-4);     
\draw[midarrow=1.9mm] (2.5,-4)--(1.5,-4);    
\draw[midarrow=1.9mm] (3.5,-4)--(2.5,-4);  
\draw[midarrow=1.9mm] (-3.5,-5)--(-2.5,-5);  
\draw[midarrow=1.9mm] (-2.5,-5)--(-1.5,-5);      
\draw[midarrow=1.9mm] (-1.5,-5)--(-0.5,-5);     
\draw[midarrow=1.9mm] (-0.5,-5)--(0.5,-5);    
\draw[midarrow=1.9mm] (1.5,-5)--(0.5,-5);     
\draw[midarrow=1.9mm] (2.5,-5)--(1.5,-5);    
\draw[midarrow=1.9mm] (3.5,-5)--(2.5,-5);
\draw[midarrow=1.9mm] (-2.5,0)--(-2.5,1);  
\draw[midarrow=1.9mm] (-2.5,-1)--(-2.5,0);      
\draw[midarrow=1.9mm] (-2.5,-2)--(-2.5,-1);     
\draw[midarrow=1.9mm] (-2.5,-2)--(-2.5,-3);    
\draw[midarrow=1.9mm] (-2.5,-3)--(-2.5,-4);     
\draw[midarrow=1.9mm] (-2.5,-4)--(-2.5,-5);    
\draw[midarrow=1.9mm] (-2.5,-5)--(-2.5,-6);
\draw[midarrow=1.9mm] (-1.5,0)--(-1.5,1);  
\draw[midarrow=1.9mm] (-1.5,-1)--(-1.5,0);      
\draw[midarrow=1.9mm] (-1.5,-2)--(-1.5,-1);     
\draw[midarrow=1.9mm] (-1.5,-3)--(-1.5,-2);    
\draw[midarrow=1.9mm] (-1.5,-3)--(-1.5,-4);     
\draw[midarrow=1.9mm] (-1.5,-4)--(-1.5,-5);    
\draw[midarrow=1.9mm] (-1.5,-5)--(-1.5,-6);
\draw[midarrow=1.9mm] (-0.5,0)--(-0.5,1);  
\draw[midarrow=1.9mm] (-0.5,-1)--(-0.5,0);      
\draw[midarrow=1.9mm] (-0.5,-1)--(-0.5,-2);     
\draw[midarrow=1.9mm] (-0.5,-2)--(-0.5,-3);    
\draw[midarrow=1.9mm] (-0.5,-4)--(-0.5,-3);     
\draw[midarrow=1.9mm] (-0.5,-4)--(-0.5,-5);    
\draw[midarrow=1.9mm] (-0.5,-5)--(-0.5,-6);
\draw[midarrow=1.9mm] (0.5,0)--(0.5,1);  
\draw[midarrow=1.9mm] (0.5,0)--(0.5,-1);      
\draw[midarrow=1.9mm] (0.5,-1)--(0.5,-2);     
\draw[midarrow=1.9mm] (0.5,-3)--(0.5,-2);    
\draw[midarrow=1.9mm] (0.5,-4)--(0.5,-3);     
\draw[midarrow=1.9mm] (0.5,-5)--(0.5,-4);    
\draw[midarrow=1.9mm] (0.5,-5)--(0.5,-6);
\draw[midarrow=1.9mm] (1.5,0)--(1.5,1);  
\draw[midarrow=1.9mm] (1.5,-1)--(1.5,0);      
\draw[midarrow=1.9mm] (1.5,-2)--(1.5,-1);     
\draw[midarrow=1.9mm] (1.5,-2)--(1.5,-3);    
\draw[midarrow=1.9mm] (1.5,-3)--(1.5,-4);     
\draw[midarrow=1.9mm] (1.5,-4)--(1.5,-5);    
\draw[midarrow=1.9mm] (1.5,-5)--(1.5,-6);
\draw[midarrow=1.9mm] (2.5,0)--(2.5,1);  
\draw[midarrow=1.9mm] (2.5,-1)--(2.5,0);      
\draw[midarrow=1.9mm] (2.5,-2)--(2.5,-1);     
\draw[midarrow=1.9mm] (2.5,-3)--(2.5,-2);    
\draw[midarrow=1.9mm] (2.5,-3)--(2.5,-4);     
\draw[midarrow=1.9mm] (2.5,-4)--(2.5,-5);    
\draw[midarrow=1.9mm] (2.5,-5)--(2.5,-6);
\end{scope}
\end{tikzpicture}
\end{gather*}
\caption{An example of the bijection between $\ASM^{I}(n)$ (left) and $\mathrm{St}_I(n)$ (right), with $n=6$ and $I=\{3\}$.}
\label{fig:bijection-ASMI-SIXM-example}
\end{figure}

\section*{Acknowledgement}
We thank Anna Weigandt for helpful conversations and for pointing us towards useful references. 
\bibliographystyle{alpha}
\bibliography{ref}

\end{document}